\theoremstyle{thmstyleone}%
\newtheorem{theorem}{Theorem}
\newtheorem{proposition}[theorem]{Proposition}%
\newtheorem{corollary}[theorem]{Corollary}%
\theoremstyle{thmstyletwo}%
\newtheorem{remark}{Remark}%
\theoremstyle{thmstylethree}%
\newcommand{\vx}{\boldsymbol x}
\newcommand{\vv}{\boldsymbol v}
\newcommand{\vu}{\boldsymbol u}
\newcommand{\vc}{\boldsymbol c}
\newcommand{\tp}{\boldsymbol p}
\newcommand{\intR}{\int_{\mathbb R^3}}
\newcommand{\intS}{\int_{\mathbb S^2}}
\newcommand{\dd}{\,\mathrm{d}}
\begin{document}

\title[Higher-order Maxwell--Stefan model of diffusion]{Higher-order Maxwell--Stefan model of diffusion}

\author*[1]{\fnm{B\'{e}r\'{e}nice} \sur{Grec}}\email{berenice.grec@u-paris.fr}

\author[2]{\fnm{Srboljub} \sur{Simi\'{c}}}\email{ssimic@uns.ac.rs}
\equalcont{These authors contributed equally to this work.}

\affil*[1]{\orgdiv{Universit\'e Paris Cit\'e, CNRS}, \orgname{MAP5}, \orgaddress{
 \city{Paris}, \postcode{F-75006}, \country{France}}}

\affil[2]{\orgdiv{University of Novi Sad}, \orgname{Faculty of Sciences, Department of Mathematics and Informatics}, \orgaddress{\street{Trg Dositeja Obradovi\'{c}a 4}, \city{Novi Sad}, \postcode{21000}, \country{Serbia}}}

\abstract{\begin{hyphenrules}{nohyphenation}The paper studies a higher-order diffusion model of Maxwell--Stefan kind. The model is based upon higher-order moment equations of kinetic theory of mixtures, which include viscous dissipation in the model. Governing equations are analyzed in a scaled form, which introduces the proper orders of magnitude of each term. In the so-called diffusive scaling, the Mach and Knudsen numbers are assumed to be of the same small order of magnitude. In the asymptotic limit when the small parameter vanishes, the model exhibits a coupling between the species' partial pressure gradients, which generalizes the classical model. Scaled equations also lead to a higher-order model of diffusion with correction terms in the small parameter. In that case, the viscous tensor is determined by genuine balance laws.\end{hyphenrules}}

\keywords{Diffusion, Maxwell--Stefan model, Moment method}

\maketitle

\vspace{-0.8cm}
\bmhead{Conflict of interest}
{\footnotesize On behalf of all authors, the corresponding author states that there is no conflict of interest.}

\bmhead{Acknowledgments}
{\footnotesize This paper was prepared during the stay of Srboljub Simi\'{c} at Universit\'{e} Paris Cit\'{e} thanks to ``Guest researchers' faculty programme 2022''. The research was also financially supported 
 by the Ministry of Science, Technological Development and Innovation of the Republic of Serbia (Grant No. 451-03-47/2023-01/200125) (S.S.).}

\newpage

\section{Introduction}

Macroscopic description of diffusion usually relies on two well-known models, the Fick and Maxwell--Stefan ones. They are both based upon the mass conservation of species, but differ in the way diffusion mechanism is described, and thus the system closed. In the Fick model, the diffusion fluxes are determined by constitutive relations of phenomenological nature -- they are linearly related to the gradients of species' chemical potentials/densities \cite{giovangigli1999multicomponent}. In the Maxwell--Stefan model, diffusion is regarded as a source of momentum exchange between the species, which is balanced by the gradients of partial pressures \cite{krishna1997maxwell,bothe2011maxwell}. The system is thus closed through a kind of momentum balance equations for the species. While the Fick model fits well into linear thermodynamics of irreversible processes \cite{de1962non}, the Maxwell--Stefan model fits in the framework of classical cross-diffusion models \cite{hutridurga2018existence,daus2020exponential}. 

It is remarkable that both diffusion models mentioned above may be derived starting from a more sophisticated model---the Boltzmann equations of the kinetic theory of mixtures. To derive the Fick model, one has to apply the Chapman--Enskog method, i.e perform the asymptotic expansion of the velocity distribution function and the conservation laws derived from the Boltzmann equations, using the Knudsen number as small parameter \cite{Chapman-Cow,FGolse,briant2020rigorous}. This amounts to a hydrodynamic limit of kinetic equations and inherits a physical assumption that the process occurs in the neighborhood of local equilibrium state. 

On the other hand, to derive the Maxwell--Stefan model, one has to extend the system of moment equations and include the momentum balances for the species. The Maxwell--Stefan relations are then obtained as an asymptotic limit of the momentum balance laws in so-called diffusive scaling \cite{boudin2015maxwell,boudin2017maxwell}. The diffusive scaling reflects the assumption that macroscopic velocities in diffusion processes are small compared to molecular velocities, while asymptotic limit amounts to neglecting inertia terms and convective fluxes in species' momentum balance laws. 

In either approach, the diffusion is a dissipative process in the sense that it is compatible with the entropy inequality. It contributes to the entropy balance law through an entropy production \cite{de1962non,krishna1997maxwell}. Nevertheless, other dissipative mechanisms could also be included in the analysis, viscous dissipation being the most prominent one. In the framework of irreversible thermodynamics, it is incorporated in the same manner as Fick diffusion -- by means of linear constitutive relation for the viscous stress tensor. However, in the case of Maxwell--Stefan model, such a generalization is less straightforward. It is usually included by assumption, and modeled by means of classical constitutive relations for Newtonian fluids \cite{kerkhof2005analysis,chen2015analysis}. 

The aim of this study is to include the viscous dissipation into the Maxwell--Stefan model by a systematic application of the moment method and diffusive scaling at the same time. The starting point is be the system of Boltzmann equations for non-reactive monatomic species. It is used to build up a corresponding system of moment equations, which consists of mass, momentum and momentum flux balance laws for the species, and the energy conservation law for the mixture. In fact, the momentum flux balance laws are the key ingredient which brings viscous dissipation into the model. Since our goal is to generalize the Maxwell--Stefan model of diffusion, all equations are transformed into a scaled (dimensionless) form, which inherits the already mentioned assumption on diffusion processes. The scaling formally ascribes a proper order of magnitude to each term appearing in the moment equations. Even more, it facilitates the systematic derivation of the higher-order model of diffusion, which inherits viscous dissipation, without the use of \emph{ad hoc} assumptions. 

The rest of the paper is organized as follows. Section \ref{sec:Kinetic_modelling} contains necessary information about kinetic modelling of mixtures, while Section \ref{sec:Diffusive_scaling} introduces the diffusive scaling into the modelling process. Section \ref{sec:MEP} is devoted to a derivation of the approximate velocity distribution function in the scaled form, which is needed for the closure of moment equations. Indeed, the usual \emph{ansatz} used for the classical Maxwell--Stefan model cannot be generalized in a straightforward way. Therefore, the maximum entropy principle is used as a tool for derivation of a proper approximation of the velocity distribution function in Theorem~
 \ref{th:MEP-Higher}. Section \ref{sec:Higher-order} contains the main results of the paper. The closed set of scaled higher-order moment equations are given in Theorem \ref{th:Moment-Equations} and Proposition \ref{th:Energy-conservation}, whereas the asymptotic limit of the higher-order model is given in Theorem \ref{th:Higher-order_M-S}. We also provide a compatibility condition which restricts the form of the equation of state in Proposition \ref{th:Ideal-gas}. Finally, the higher-order diffusion model is provided in Proposition \ref{th:Higher-order_Diffusion}. The paper ends with appropriate conclusions and outlook to possible further studies. 

\section{Kinetic modelling of mixtures} 
\label{sec:Kinetic_modelling}

In this section, the basics of mixture modelling in kinetic theory will be presented. This will serve as a reference for further study of the scaled equations and the maximum entropy principle. 

\subsection{Boltzmann equations for mixtures}

Consider a mixture of gases consisted of $S$ identifiable species. The state of each species is determined by the velocity distribution function $f^{i}(t,\vx,\vv) \equiv f^{i}(\vv)$, $i = 1, \ldots, S$. If the external forces are negligible, the behavior of a non-reactive mixture is described by the system of Boltzmann equations:
\begin{equation}\label{eq:Boltzmann-system}
  \partial_t f^i + \vv \cdot \nabla_{\vx} f^i = \sum_{j=1}^S Q^{ij}(f^i,f^j)(\vv),  \qquad 1\leq i \leq S,
\end{equation}
where for any $1\leq i,j\leq S$, $Q^{ij}(f^i,f^j)(\vv)$ is the collision operator which determines the rate of change of distribution functions due to particle collisions of species $i$ and $j$. It has the form:
\begin{equation}\label{eq:CollisionOp}
  Q^{ij}(f^i,f^j)(\vv) = \intR \intS \left[ f^{i}(\vv') f^{j}(\vv'_{\ast}) 
    - f^{i}(\vv) f^{j}(\vv_{\ast}) \right] \mathcal B^{ij}(\vv,\vv_{\ast},\boldsymbol{\sigma}) \dd \boldsymbol{\sigma} \dd \vv_{\ast},  
\end{equation}
where $\mathcal B^{ij}(\vv,\vv_{\ast},\boldsymbol{\sigma})$ are the collision cross sections. These cross sections are assumed for simplicity to correspond to Maxwell molecules, meaning that 
\begin{equation}\label{eq:Maxwell_molecules}
 \mathcal B^{ij} (\vv,\vv_*,\sigma) = b^{ij} (\cos \theta) .
\end{equation}
The collision operators can be written under a weak form as follows. For any~$\phi(\vv)$ such that the following integrals make sense, we have for any $1\leq i,j\leq S$
\begin{multline}\label{eq:weak_form}
\intR Q^{ij}(f^i,f^j)(\vv) \phi(\vv) \dd\vv \\
= \intR\intR\intS b^{ij}(\cos\theta) f^i(\vv) f^j(\vv_*) (\phi(\vv') - \phi(\vv)) \dd\sigma \dd\vv_*\dd\vv.
\end{multline}
Choosing $\phi(\vv) =1$, we obtain the obvious conservation property
\begin{equation}\label{eq:conservation_collision}
 \intR  Q^{ij}(f^i,f^j)(\vv) \dd\vv =0 .
\end{equation}

\subsection{Moment equations and moments} 

The moment method is one of the standard methods of analysis of the Boltzmann equations. It is based upon solving a finite number of transfer equations for the moments of the velocity distribution function, the so-called moment equations, instead of solving the Boltzmann equation itself. In other words, the solution of the Boltzmann equations is approximated by solving the corresponding set of moment equations. 

The moment equations are derived from the Boltzmann equations and actually correspond to their weak formulation. Starting from the Boltzmann equations \eqref{eq:Boltzmann-system}, after multiplication with a test function $\psi^{i}(\vv)$ and integration over velocity space, one obtains the set of moment equations for any $1\leq i \leq S$
\begin{equation}\label{eq:MomentEq}
  \partial_t \intR \psi^{i}(\vv) f^i \dd \vv 
	+ \nabla_{\vx} \cdot \intR \vv \psi^{i}(\vv) f^i \dd \vv
	= \sum_{j=1}^S \intR \psi^{i}(\vv) Q^{ij}(f^i,f^j)(\vv) \dd \vv.
\end{equation}
The moments of the velocity distribution function $ \intR \psi^{i}(\vv) f^i \dd \vv$ may correspond to the densities of observable macroscopic quantities, but they could also be tensorial quantities which may or may not have apparent physical meaning. 

Choice of the moments/densities which will be taken into account is crucial for the modelling and the accuracy of approximation. In a standard way, physically motivated moments for species $i$ are the partial mass $\rho^{i}$, momentum~$\rho^{i} u^{i}_{k}$, $1\leq k\leq 3$ and energy $\rho^{i} E^{i}$ densities 
\begin{align} \label{eq:Moments-Basic}
  \rho^{i}(t,\vx) & := \intR m_{i} f^{i} \dd \vv, 
  \nonumber \\ 
  \rho^{i}(t,\vx) u^{i}_{k}(t,\vx) & := \intR m_{i} v_{k} f^{i} \dd \vv, 
  \nonumber \\ 
  \rho^{i}(t,\vx) E^{i}(t,\vx) & := \intR \frac{1}{2} m_{i} \vert \vv \vert^2 f^{i} \dd \vv. 
\end{align}
For higher-order models, higher-order moments have to be taken as densities. To this end, for any species $i$, partial momentum fluxes $P^{i}_{k\ell}$ and their corresponding fluxes $P^{i}_{k\ell n}$, $1\leq k,\ell,n\leq 3$ will be introduced as follows
\begin{align} \label{eq:Moments-Higher}
  P^{i}_{k\ell}(t,\vx) & := \intR m_{i} v_{k} v_{\ell} f^{i} \dd \vv, 
  \nonumber \\ 
  P^{i}_{k\ell n}(t,\vx) & := \intR m_{i} v_{k} v_{\ell} v_{n} f^{i} \dd \vv.
\end{align}
By defining the peculiar velocities $c^{i}_{k} := v_{k} - u^{i}_{k}$ we may observe the following relations: 
\begin{gather} \label{eq:Intrinsic}
  \rho^{i} E^{i} = \frac{1}{2} \rho^{i} \vert \vu^{i}\vert^2 
	+ \rho^{i} \varepsilon^{i}, 
  \nonumber \\ 
  P^{i}_{k\ell} = \rho^{i} u^{i}_{k} u^{i}_{\ell} + p^{i}_{k\ell}, 
  \nonumber \\ 
  P^{i}_{k\ell n} = \rho^{i} u^{i}_{k} u^{i}_{\ell} u^{i}_{n} 
	+ u^{i}_{k} p^{i}_{\ell n} + u^{i}_{\ell} p^{i}_{nk} + u^{i}_{n} p^{i}_{k\ell}
	+ p^{i}_{k\ell n},
\end{gather}
where the partial pressures $p^{i}$, 
partial pressure tensors $p^{i}_{k\ell}$ and partial non-convective fluxes of the momentum fluxes are defined as follows: 
\begin{align} \label{eq:Moments-Peculiar}
  3 p^i(t,\vx) & := \intR  m_{i} \vert \vc^{i}\vert^2 f^{i} \dd \vv, 
  \nonumber \\ 
  p^{i}_{k\ell}(t,\vx) & := \intR m_{i} c^{i}_{k} c^{i}_{\ell} f^{i} \dd \vv, 
  \nonumber \\ 
  p^{i}_{k\ell n}(t,\vx) & := \intR m_{i} c^{i}_{k} c^{i}_{\ell} c^{i}_{n} f^{i} \dd \vv.
\end{align}
Note that the partial pressures $p^{i}$ are related to the trace of partial pressure tensors $p^{i}_{k\ell}$ by
\begin{equation} \label{eq:Partial-Pressure}
  p^{i} = \frac{1}{3} \sum_{k=1}^{3} p^{i}_{kk}.
\end{equation}
In the case of monatomic gases, we have the following relation between the partial internal energy densities and the partial pressures
$$3 p^{i} = 2 \rho^{i} \varepsilon^{i}.$$ 

For further computations, it will be useful to split the partial pressure tensors into a sum of spherical part, proportional to identity tensor, and deviatoric part $p^{i}_{\langle k\ell \rangle}$: 
\begin{equation}
  p^{i}_{k\ell} = p^{i} \delta_{k\ell} + p^{i}_{\langle k\ell \rangle}, 
\end{equation}
where $\delta_{k\ell}$ is the Kronecker delta. This has the obvious consequence that $\sum_{k=1}^{3} p^{i}_{\langle kk \rangle} = 0$.

\subsection{Kinetic approach to the Maxwell--Stefan model} 

Let us give a brief summary of  the existing results about the Maxwell--Stefan diffusion model, obtained from the moment equations \eqref{eq:MomentEq}.
To derive the \emph{classical Maxwell--Stefan diffusion model}, one has to use moment equations for the mass densities $\rho^{i}$ and for the momentum densities $\rho^{i} u^{i}_{k}$ \cite{boudin2015maxwell}. For derivation of the \emph{non-isothermal Maxwell--Stefan model}, with or without chemical reactions, apart from moment equations for mass and momentum densities, one has to include the moment equations for energy densities $\rho^{i} E^{i}$ \cite{hutridurga2018existence,anwasia2020formal,anwasia2020maxwell}. More precisely, in the standard approximation (the so-called asymptotic limit, to be presented in the sequel), it is sufficient to use the energy conservation equation for the whole mixture. 

All the approximations mentioned above assume that the velocity distribution functions are in the form of a Maxwellian, i.e. in some kind of local equilibrium in which dissipative effects (viscous and thermal) are neglected. However, if one takes them into account, higher-order moments and their corresponding transfer equations have to be exploited. Our study of \emph{higher-order Maxwell--Stefan models} will be limited to the extension which takes into account only viscous dissipation through moment equations for partial momentum fluxes $P^{i}_{k\ell}$.

\section{Diffusive scaling}
\label{sec:Diffusive_scaling}

In the analysis of particular processes, not all the terms in the governing equations have equal importance, i.e. some of them may be neglected. Such simplifications require insight into the features of the process which is analyzed and appropriate order-of-magnitude estimates. A usual way to reach that goal is to put the governing equations into dimensionless/scaled form. This leads to a reduction of the number of parameters in the model, and yields an  estimate of the order of magnitude of particular terms. 

Diffusion processes in gaseous mixtures possess two important features: (i) they occur in the hydrodynamic setting, in which the characteristic macroscopic length scale is much larger than the mean free path of the particles, and (ii) the characteristic macroscopic velocity is much smaller than the reference molecular velocity, which is of the order of the speed of sound. A scaling of the governing equations which reflects these features is called diffusive scaling. 

\subsection{Boltzmann equations in diffusive scaling} 

The dimensionless form of the Boltzmann equations~\eqref{eq:Boltzmann-system} is obtained by means of scaling  variables similarly as in~\cite{anwasia2022maximum}. To this end, the macroscopic time and length scales are introduced, denoted $\tau$ and $L$ respectively, as well as the reference kinetic temperature $T_{0}$. They imply two independent velocity scales: $u_{0} = \frac{L}{\tau}$, which is the speed of macroscopic transport of the gas over distance $L$ in time $\tau$, and $c_{0} = \left( \frac{5}{3} \frac{k_{B}}{m_{0}} T_{0} \right)^{1/2}$, which is the speed of sound in a monatomic gas, with $m_{0}$ being the average atomic mass of the mixture and $k_B$ the Boltzmann constant. Using $\tau$, $L$ and $c_{0}$ as reference time, space and velocity scales, respectively, one may introduce the following dimensionless quantities: 
\begin{gather*}
  \hat{t} = \frac{t}{\tau}, \quad \hat{\vx} = \frac{\vx}{L}, \quad 
  \hat{\vv} = \frac{\vv}{c_{0}}, \quad 
  \hat{f}^{i}(\hat{\vv}) = \frac{L^{3} c_{0}^{3}}{N} f^{i}(\vv), 
  \\ 
  \hat{B}^{ij}(\hat{\vv}, \hat{\vv}_{\ast}, \hat{\boldsymbol{\sigma}})
    = \frac{1}{c_{0} 4 \pi r^{2}} B^{ij}(\vv,\vv_{\ast},\boldsymbol{\sigma}), 
    \\
  \hat{Q}^{ij}(\hat{f}^i,\hat{f}^j)(\hat{\vv}) 
    = \left( \frac{L^{3} c_{0}^{3}}{N} \right)^{2} \frac{1}{c_{0} 4 \pi r^{2}} 
      \frac{1}{c_{0}^{3}} Q^{ij}(f^i,f^j)(\vv),
\end{gather*}
where $N$ is the number of gas molecules in a volume $L^{3}$ and $r$ is the average radius of the molecules. In such a way, equations \eqref{eq:Boltzmann-system} acquire the following dimensionless form: 
\begin{equation}\label{eq:Boltzmann-DimLess}
  \mathrm{Ma} \partial_{\hat{t}} \hat{f}^i + \hat{\vv} \cdot \nabla_{\hat{\vx}} \hat{f}^i 
    = \frac{1}{\mathrm{Kn}} \sum_{j=1}^S \hat{Q}^{ij}(\hat{f}^i,\hat{f}^j)(\hat{\vv}), 
\end{equation}
where
\begin{equation*}
  \mathrm{Ma} = \frac{u_{0}}{c_{0}}, \quad 
  \mathrm{Kn} = \frac{\text{mean free path}}{\text{macroscopic length scale}}
    = \frac{L^{3}}{N \times 4 \pi r^{2}} \times \frac{1}{L} = \frac{L^2}{4\pi r^2 N},
\end{equation*}
are the Mach and the Knudsen numbers, respectively. 

In this paper, we focus on the diffusive scaling, which is a particular form of dimensionless Boltzmann equations \eqref{eq:Boltzmann-DimLess} in which the Mach and Knudsen numbers are assumed to be of the same small order of magnitude
\begin{equation}\label{eq:DiffusiveScaling}
  \mathrm{Ma} = \mathrm{Kn} = \alpha \ll 1. 
\end{equation} 
With this assumption, we obtain the Boltzmann equations in diffusive scaling:
\begin{equation}\label{eq:Boltzmann-Diffusive}
  \alpha \partial_t f^i + \vv \cdot \nabla_{\vx} f^i 
    = \frac{1}{\alpha} \sum_{j=1}^S Q^{ij}(f^i,f^j)(\vv), 
\end{equation}
where we now dropped the hats for readability. The factor $1/\alpha$ on the right-hand side is typical for a hydrodynamic limit of the Boltzmann equations, while multiplication of the time derivative with $\alpha$ implies that the processes are slow. 

\subsection{Moment equations in diffusive scaling} 

Starting from the Boltzmann equations in the diffusive scaling \eqref{eq:Boltzmann-Diffusive}, after multiplication with a test function $\psi^{i}(\vv)$ and integration over the velocity space, one obtains the (dimensionless) moment equations in diffusive scaling:
\begin{equation}\label{eq:MomentEq-Diffusive}
  \alpha \partial_t \intR \psi^{i}(\vv) f^i \dd \vv 
    + \nabla_{\vx} \cdot \intR \vv \psi^{i}(\vv) f^i \dd \vv
	= \frac{1}{\alpha} \sum_{j=1}^S \intR \psi^{i}(\vv) Q^{ij}(f^i,f^j)(\vv) \dd \vv.
\end{equation}
The choice of the moments, i.e. of the test functions, determines the state space on which the Boltzmann equations are projected and their solution is approximated. To close the system of moment equations \eqref{eq:MomentEq-Diffusive} one has to approximate the velocity distribution function. Grad \cite{grad1949kinetic} originally used Hermite polynomial expansions as an approximation, but it was shown by Kogan \cite{kogan1969rarefied} that equivalent results may be obtained by means of a variational approach, the so-called maximum entropy principle.

\section{The maximum entropy principle}
\label{sec:MEP}

The maximum entropy principle (MEP) is a constrained variational formulation which determines the approximate velocity distribution \cite{muller1993extended,dreyer1987maximisation,Kremer-Introduction}. The functional to be maximized is the kinetic entropy $H(t,\vx)$, defined as: 
\begin{equation} \label{eq:Entropy}
  H(t,\vx) := \sum_{i=1}^{S} H^{i}(t,\vx), \quad 
  H^{i}(t,\vx) := - k_{\mathrm{B}} \intR f^{i} \log (b^{i} f^{i}) \dd \vv, 
\end{equation}
where $b^{i}$ is a dimensional constant used to make dimensionless the argument of the $\log$ function.
The constraints are taken to be the moments/macroscopic variables which determine the state space. To derive the Maxwell--Stefan model of diffusion it is sufficient to choose the partial mass, momentum and energy densities as constraints. This implies the approximate velocity distribution functions to be the local Maxwellians. In this study we extend the usual state space to include the pressure tensor in it. Such an extension will provide us with a more detailed insight into dissipation, other than diffusion, which may occur in the system. 

In the sequel we shall follow the procedure given in \cite{anwasia2022maximum} and exploit the diffusive scaling to derive properly scaled velocity distribution functions $\hat{f}^{i}$. To that end we introduce the following dimensionless (scaled) variables: 
\begin{gather}\label{eq:DLess-Macroscopic}
  \hat{H}^{i} = \frac{L^{3}}{k_{\mathrm{B}} N} H^{i}, \quad 
  \hat{b}^{i} = \frac{N}{L^{3} c_{0}^{3}} b^{i}, \quad 
  \hat{m}_{i} = \frac{m_{i}}{m_{0}}, \quad 
  \hat{\rho}^{i} = \frac{L^{3}}{m_{0} N} \rho^{i}, \quad 
  \hat{\vu}^{i} = \frac{\vu^{i}}{u_{0}}, 
  \nonumber \\
  \hat{\vc}^{i} = \frac{\vc^{i}}{c_{0}}, \quad 
  \hat{E}^{i} = \frac{E^{i}}{c_{0}^{2}}, \quad 
  \hat{T} = \frac{T}{T_{0}}, 
  \nonumber \\
  \hat{P}^{i}_{k\ell} = \frac{L^{3}}{m_{0} N c_{0}^{2}} P^{i}_{k\ell}, \quad
  \hat{p}^{i}_{k\ell} = \frac{L^{3}}{m_{0} N c_{0}^{2}} p^{i}_{k\ell}, \quad
  \hat{P}^{i}_{k\ell n} = \frac{L^{3}}{m_{0} N c_{0}^{3}} P^{i}_{k\ell n}, 
\end{gather}
and we again drop the hats in further computations for convenience. With these, equations \eqref{eq:Moments-Basic} and \eqref{eq:Moments-Higher} are the same, except for the substitution $u^{i}_{k} \mapsto \alpha u^{i}_{k}$, whereas the scaled kinetic entropy \eqref{eq:Entropy} reads:
\begin{equation} \label{eq:Entropy-DLess}
  H(t,\vx) = \sum_{i=1}^{S} H^{i}(t,\vx), \quad 
  H^{i}(t,\vx) = - \intR f^{i} \log (b^{i} f^{i}) \dd \vv.
\end{equation}
The MEP can now be formulated as a variational problem with constraints in diffusive scaling. 

For the completeness of the study we shall firstly recover the local equilibrium velocity distribution functions, and then derive their higher-order approximations which comprise the partial pressure tensors.

\subsection{The local equilibrium approximation} 

The MEP for the local equilibrium approximation consists in finding the velocity distribution functions $f^{i}(t,\vx,\vv)$ maximizing the kinetic entropy \eqref{eq:Entropy}
subject to the following constraints: 
\begin{align} \label{eq:MomentsLocal-DLess}
  \rho^{i} & = \intR m_{i} f^{i} \dd \vv, 
  \nonumber \\ 
  \alpha \rho^{i} u^{i}_{k} & = \intR m_{i} v_{k} f^{i} \dd \vv, 
  \nonumber \\ 
  \alpha^{2} \rho^{i} \vert \vu^{i}\vert^2 +2 \rho^i \varepsilon^i 
    & = \intR m_{i} \vert \vv\vert^2 f^{i} \dd \vv.
\end{align} 
In \eqref{eq:MomentsLocal-DLess} we dropped $(t,\vx)$ dependence of macroscopic variables. 

\begin{theorem} \label{th:MEP-Local}
The velocity distribution functions which maximize the entropy \eqref{eq:Entropy}  
with constraints \eqref{eq:MomentsLocal-DLess} have the following form: 
\begin{equation} \label{eq:fi-Local0}
  f^{i}(t,\vx,\vv) = \frac{\rho^{i}}{m_{i}} \left( \frac3{4\pi \varepsilon^i}
   \right)^{3/2} 
    \exp \left(-  \frac{3\vert \vv - \alpha \vu^{i} \vert^{2}}{4\varepsilon^i} 
    \right).
\end{equation}
\end{theorem}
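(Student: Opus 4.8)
```latex
\textbf{Proof proposal.}
The plan is to solve the constrained entropy maximization via Lagrange multipliers. The functional to maximize is the scaled kinetic entropy $H^{i} = - \intR f^{i} \log (b^{i} f^{i}) \dd \vv$, and the three constraints in \eqref{eq:MomentsLocal-DLess} fix the scaled mass, momentum and energy moments of $f^{i}$. First I would introduce three Lagrange multipliers per species---say $\lambda^{i}_{0}$ coupled to the mass constraint, a vector multiplier $\lambda^{i}_{k}$ coupled to the momentum constraint, and a scalar $\lambda^{i}_{2}$ coupled to the energy constraint---and form the augmented functional by subtracting the multiplier-weighted constraints from $H^{i}$. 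Since the species do not couple in either the entropy \eqref{eq:Entropy-DLess} or the constraints \eqref{eq:MomentsLocal-DLess}, the maximization decouples and can be carried out separately for each $i$.

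Next I would compute the first variation with respect to $f^{i}$ and set it to zero. Differentiating $-f^{i}\log(b^{i} f^{i})$ gives $-\log(b^{i} f^{i}) - 1$, so the Euler--Lagrange (stationarity) condition reads, pointwise in $\vv$,
\begin{equation*}
  -\log(b^{i} f^{i}) - 1 - m_{i}\bigl(\lambda^{i}_{0} + \lambda^{i}_{k} v_{k} + \lambda^{i}_{2}\vert\vv\vert^{2}\bigr) = 0,
\end{equation*}
where the test functions $1$, $v_{k}$, $\vert\vv\vert^{2}$ of the moments \eqref{eq:MomentsLocal-DLess} enter weighted by $m_{i}$. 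Solving for $f^{i}$ shows that the maximizer is necessarily a Gaussian in $\vv$, i.e. $f^{i} = \exp\bigl(-1 - \log b^{i} - m_{i}(\lambda^{i}_{0} + \lambda^{i}_{k}v_{k} + \lambda^{i}_{2}\vert\vv\vert^{2})\bigr)$. Completing the square in the exponent recasts this as a shifted Maxwellian of the form $A^{i}\exp(-B^{i}\vert\vv - \vV^{i}\vert^{2})$ for constants $A^{i}, B^{i} > 0$ and a mean-velocity vector $\vV^{i}$ determined by the multipliers.

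The remaining work is to identify the three constants $A^{i}, B^{i}, \vV^{i}$ by substituting the Gaussian ansatz back into the three constraints and evaluating the Gaussian moment integrals over $\mathbb{R}^{3}$. The mass constraint fixes the normalization $A^{i}$ in terms of $B^{i}$; the momentum constraint forces the mean velocity to be $\vV^{i} = \alpha \vu^{i}$, which is precisely why the scaled drift carries the factor $\alpha$ (this is where the diffusive scaling \eqref{eq:DiffusiveScaling} leaves its fingerprint); and the energy constraint, combined with the monatomic relation $3p^{i} = 2\rho^{i}\varepsilon^{i}$ and the peculiar-velocity decomposition \eqref{eq:Intrinsic}, determines $B^{i} = 3/(4\varepsilon^{i})$. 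Back-substitution then yields the stated form \eqref{eq:fi-Local0}.

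I expect the main obstacle to be bookkeeping rather than conceptual: carefully carrying the scaling factors of $\alpha$ through the second and third constraints so that the mean velocity emerges as $\alpha\vu^{i}$ and the energy integral correctly separates into convective part $\alpha^{2}\rho^{i}\vert\vu^{i}\vert^{2}$ and internal part $2\rho^{i}\varepsilon^{i}$, matching the left-hand side of the third constraint. One should also note that strictly speaking the stationarity condition only identifies a critical point; that it is the genuine \emph{maximizer} follows from the strict concavity of the entropy functional $f \mapsto -\int f\log(b f)$, which I would invoke to conclude uniqueness and the maximizing property.
```
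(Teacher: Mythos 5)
Your proposal follows essentially the same route as the paper: form the augmented functional with multipliers weighted by $m_{i}$ for the mass, momentum and energy constraints, derive the exponential stationarity condition pointwise in $\vv$, and then identify the multipliers by substituting back into the three constraints, which yields the drift $\alpha\vu^{i}$ and the width $3/(4\varepsilon^{i})$ exactly as in the paper's computation. The only additions — completing the square explicitly and invoking strict concavity to upgrade the critical point to a maximizer — are correct refinements the paper leaves implicit.
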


\begin{proof} 
Although the proof follows the same steps as in \cite{anwasia2022maximum}, we shall briefly repeat them.  
Let us define the following extended functional 
\begin{multline*}
 \intR \mathcal{L} \left(\vv, f^{i}, \nabla_{\vv} f^{i} \right) \dd \vv :=\\
 \intR \sum_{i=1}^{S} \left( f^{i} \log(b^{i} f^{i}) 
    + \lambda_{\rho}^{i} m_{i} f^{i} 
    + \sum_{k=1}^{3} \lambda_{u_{k}}^{i} m_{i} v_{k} f^{i} 
    + \lambda_\varepsilon^{i} m_{i} \vert \vv \vert^{2} f^{i} \right) \dd \vv ,
\end{multline*}  
where $\lambda_{\rho}^{i}(t,\vx), \lambda_{u_{k}}^{i}(t,\vx), \lambda_\varepsilon^{i}(t,\vx) \in \mathbb{R}$ are the unknown multipliers. Taking into account that $\mathcal{L}$ is independent of $\nabla_{\vv} f^{i}$, the necessary condition for extremum reduces to $\partial \mathcal{L}/\partial f^{i} = 0$, $i = 1, \ldots, S$, yielding $S$ uncoupled equations whose solutions read for any $1\leq i \leq S$
\begin{equation*}
  f^{i} = \frac{1}{b^{i}} \exp \left[ - \left( 1 + m_{i} \lambda_{\rho}^{i} 
    + m_{i} \sum_{k=1}^{3} \lambda_{u_{k}}^{i} v_{k} 
    + m_{i} \lambda_\varepsilon^{i} \vert \vv \vert^{2} \right) \right].
\end{equation*}
By inserting these results into the constraints \eqref{eq:MomentsLocal-DLess} and with rather straightforward computations the following relations are obtained: 
\begin{gather*}
  \lambda_\varepsilon^{i} =\frac3{4m_i \varepsilon^i}, 
  \quad 
  \frac{\lambda_{u_{k}}^{i}}{2 \lambda_\varepsilon^{i}} = - \alpha u_{k}^{i}, 
  \\
  \frac{m_{i}}{b^{i}} \exp \left( - 1 - \lambda_{\rho}^{i} \right) 
    \exp \left( \frac{m_{i}}{4 \lambda_\varepsilon^{i}} 
      \sum_{k=1}^{3} \left( \lambda_{u_{k}}^{i} \right)^{2} 
      \right)
    \left( \frac{\pi}{m_{i} \lambda_\varepsilon^{i}} \right)^{3/2} = \rho^{i}.
\end{gather*}
Returning these expressions into the form of $f^i$, one obtains \eqref{eq:fi-Local0}, which completes the proof. 
\end{proof}

\subsection{The higher-order approximation} 

Higher-order approximations of the velocity distribution functions aim at extending the state space and capturing non-equilibrium effects. To that end, the kinetic entropy \eqref{eq:Entropy} remains the same, whereas the constraints have the following form:
\begin{align}
  \rho^{i} & = \intR m_{i} f^{i} \dd \vv, 
  \label{eq:MomentsHigher-DLess0} \\ 
  \alpha \rho^{i} u^{i}_{k} & = \intR m_{i} v_{k} f^{i} \dd \vv, 
  \label{eq:MomentsHigher-DLess1} \\ 
  \alpha^{2} \rho^{i} u^{i}_{k} u^{i}_{\ell} + p^{i}_{k\ell} 
	& = \intR m_{i} v_{k} v_{\ell} f^{i} \dd \vv. \label{eq:MomentsHigher-DLess2}
\end{align} 
In the sequel it will be more convenient to use the peculiar velocities $c^{i}_{k} = v_{k} - \alpha u^{i}_{k}$ and formulate the MEP for the constraints:
\begin{align} \label{eq:MomentsPeculiar-DLess}
  \rho^{i} & = \intR m_{i} f^{i} \dd \vc, 
  \nonumber \\ 
  0 & = \intR m_{i} c^{i}_{k} f^{i} \dd \vc, 
  \nonumber \\ 
  p^{i}_{k\ell} & = \intR m_{i} c^{i}_{k} c^{i}_{\ell} f^{i} \dd \vc.
\end{align}
Note that $\dd \vv = \dd \vc$ holds. 

The extended functional for the constraints \eqref{eq:MomentsHigher-DLess0}-\eqref{eq:MomentsHigher-DLess2} is 
\begin{multline} \label{eq:MEP-Extended}
    \intR \sum_{i=1}^{S} \left( f^{i} \log(b^{i} f^{i}) 
	+ \lambda_{\rho}^{i} m_{i} f^{i} 
	+ \sum_{k=1}^{3} \lambda_{u_{k}}^{i} m_{i} v_{k} f^{i} 
	\right.
   \\
     \left.
	\quad + \sum_{k,\ell=1}^{3} \lambda_{P_{k\ell}}^{i} m_{i} v_{k} v_{\ell} f^{i} \right) \dd \vv. 
\end{multline} 
The same extended functional written for $u^{i}_{k} = 0$, $i = 1, \ldots, S$, corresponds to MEP for the constraints \eqref{eq:MomentsPeculiar-DLess}:
\begin{align} \label{eq:MEP-Extended_Peculiar}
  \mathcal{H} & = \intR \sum_{i=1}^{S} \left( f^{i} \log(b^{i} f^{i}) 
	+ \tilde{\lambda}_{\rho}^{i} m_{i} f^{i} 
	+ \sum_{k=1}^{3} \tilde{\lambda}_{u_{k}}^{i} m_{i} c^{i}_{k} f^{i} 
	\right.
  \nonumber \\
  & \left.
	\quad + \sum_{k,\ell=1}^{3} \tilde{\lambda}_{p_{k\ell}}^{i} m_{i} c^{i}_{k} c^{i}_{\ell} 
	f^{i} \right) \dd \vc, 
\end{align} 
but with different multipliers. However, since \eqref{eq:MEP-Extended} and \eqref{eq:MEP-Extended_Peculiar} must have the same value due to Galilean invariance, comparison leads to the following relations between the multipliers: 
\begin{align} \label{eq:Multipliers}
  \lambda_{\rho}^{i} & = \tilde{\lambda}_{\rho}^{i} 
    - \alpha \sum_{k=1}^{3} \tilde{\lambda}_{u_{k}}^{i} u^{i}_{k} 
    + \alpha^{2} \sum_{k,\ell=1}^{3} \tilde{\lambda}_{p_{k\ell}}^{i} u^{i}_{k} u^{i}_{\ell}, 
  \nonumber \\ 
  \lambda_{u_{k}}^{i} & = \tilde{\lambda}_{u_{k}}^{i} 
    - 2 \alpha \sum_{\ell=1}^{3} \tilde{\lambda}_{p_{k\ell}}^{i} u^{i}_{\ell}, 
  \nonumber \\ 
  \lambda_{P_{k\ell}}^{i} & = \tilde{\lambda}_{p_{k\ell}}^{i} = \tilde{\lambda}_{p_{\ell k}}^{i}.
\end{align}
Therefore, the whole analysis of the higher-order approximation can be based upon the MEP for the constraints \eqref{eq:MomentsPeculiar-DLess}, and expressed afterwards in terms of \eqref{eq:MomentsHigher-DLess0}-\eqref{eq:MomentsHigher-DLess2} if needed. 

\begin{theorem} \label{th:MEP-Higher}
The velocity distribution functions which maximizes the entropy \eqref{eq:Entropy} 
with constraints \eqref{eq:MomentsPeculiar-DLess} have the following form: 
\begin{equation} \label{eq:fi-Local}
  f^{i}(t,\vx,\vc) = \frac{\rho^{i}}{m_{i}} \left( \frac{\rho^{i}}{2 \pi} \right)^{3/2} 
    \frac{1}{(\det \tp^{i})^{1/2}} 
	\exp \left( - \frac{\rho^{i}}{2} {\vc^{i}}^T (\tp^{i})^{-1} \vc^{i} \right),
\end{equation}
where $\tp^{i} = \left\{ p^{i}_{k\ell} \right\}_{k,\ell=1}^{3}$ is the partial pressure tensor, and superscript ${}^T$ stands for the transpose of the vector. 
\end{theorem}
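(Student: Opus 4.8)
The plan is to mimic the Lagrange-multiplier computation of Theorem~\ref{th:MEP-Local}, but now carrying the full quadratic form rather than an isotropic one. Working from the peculiar-velocity functional \eqref{eq:MEP-Extended_Peculiar} is the natural choice, since the vanishing first moment streamlines the algebra. First I would impose the necessary condition $\partial \mathcal{L}/\partial f^{i} = 0$, which again decouples across species and yields the Gaussian
\begin{equation*}
  f^{i} = \frac{1}{b^{i}} \exp\left[-\left(1 + m_{i}\tilde{\lambda}_{\rho}^{i} + m_{i}\sum_{k} \tilde{\lambda}_{u_{k}}^{i} c^{i}_{k} + m_{i}\sum_{k,\ell} \tilde{\lambda}_{p_{k\ell}}^{i} c^{i}_{k} c^{i}_{\ell}\right)\right].
\end{equation*}
Collecting the quadratic coefficients into the symmetric matrix $\boldsymbol{\Lambda}^{i} = \{\tilde{\lambda}_{p_{k\ell}}^{i}\}$, this is a multivariate Gaussian, provided $\boldsymbol{\Lambda}^{i}$ is positive definite.

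Next I would impose the three constraints \eqref{eq:MomentsPeculiar-DLess} in turn. The zero-momentum constraint forces the linear term to vanish: completing the square in $\vc^{i}$ shows the Gaussian mean is $-\tfrac{1}{2}(\boldsymbol{\Lambda}^{i})^{-1}\tilde{\boldsymbol{\lambda}}_{u}^{i}$, so the condition $\intR m_{i} c^{i}_{k} f^{i} \dd\vc = 0$ gives $\tilde{\lambda}_{u_{k}}^{i} = 0$ for all $k$, leaving a distribution centered at $\vc^{i}=0$. The key computation is then the standard Gaussian moment identity: writing $M = 2 m_{i}\boldsymbol{\Lambda}^{i}$, I would use $\intR \exp(-\tfrac{1}{2} \vc^{T} M \vc)\dd\vc = (2\pi)^{3/2}(\det M)^{-1/2}$ together with $\intR c_{k} c_{\ell} \exp(-\tfrac{1}{2} \vc^{T} M \vc)\dd\vc = (2\pi)^{3/2}(\det M)^{-1/2}(M^{-1})_{k\ell}$. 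Dividing the second-moment constraint by the mass constraint immediately yields $(M^{-1})_{k\ell} = p^{i}_{k\ell}/\rho^{i}$, that is $M = \rho^{i}(\tp^{i})^{-1}$, so the exponent becomes $-\tfrac{\rho^{i}}{2}{\vc^{i}}^{T}(\tp^{i})^{-1}\vc^{i}$, exactly as claimed.

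It then remains only to fix the normalization. Substituting $M = \rho^{i}(\tp^{i})^{-1}$ into the mass constraint and using $\det M = (\rho^{i})^{3}/\det\tp^{i}$ in three dimensions, the prefactor is forced to equal $\frac{\rho^{i}}{m_{i}}\left(\frac{\rho^{i}}{2\pi}\right)^{3/2}(\det \tp^{i})^{-1/2}$, recovering \eqref{eq:fi-Local}; the multipliers $\tilde{\lambda}_{\rho}^{i}$ and $\boldsymbol{\Lambda}^{i}$ are thereby determined but need not be displayed.

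I expect the only genuine subtlety to be the well-posedness of the Gaussian integral, i.e. the requirement that $\boldsymbol{\Lambda}^{i}$ (equivalently $(\tp^{i})^{-1}$) be symmetric positive definite so that $f^{i}$ is integrable. This is not automatic from the variational equation alone and must instead be read off from the physical admissibility of the data: since $\tp^{i}$ is a pressure tensor built from \eqref{eq:Moments-Peculiar} as a (mass-weighted) covariance of the peculiar velocity, it is positive definite, which guarantees invertibility of $(\tp^{i})^{-1}$ and convergence of all moment integrals used above. The matrix bookkeeping relating $\boldsymbol{\Lambda}^{i}$ to $(\tp^{i})^{-1}$ is the main computational step, but it is routine once the Gaussian moment identities are in place.
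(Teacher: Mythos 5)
Your proposal is correct and follows the same overall architecture as the paper's proof: form the extended functional \eqref{eq:MEP-Extended_Peculiar}, obtain the Gaussian \emph{ansatz} from $\partial\mathcal{L}/\partial f^{i}=0$, and then determine the multipliers from the three constraints. Where you differ is in the execution of the two key computations. To show $\tilde{\lambda}_{u_{k}}^{i}=0$, the paper diagonalizes $\tilde{\boldsymbol{\lambda}}_{\tp}^{i}$ by an orthogonal transformation, evaluates the first-moment integral in the rotated frame via Fubini, and concludes from the nonsingularity of the resulting homogeneous linear system; you instead complete the square and observe that the Gaussian mean $-\tfrac12(\boldsymbol{\Lambda}^{i})^{-1}\tilde{\boldsymbol{\lambda}}_{\vu}^{i}$ must vanish, which is more direct and gives the same conclusion. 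Likewise, for the second moments the paper computes $\rho^{i}$ and $p^{i}_{k\ell}$ explicitly in terms of $\det\tilde{\boldsymbol{\lambda}}_{\tp}^{i}$ and its adjugate before inverting, whereas you invoke the standard multivariate Gaussian identities to read off $M^{-1}=\tp^{i}/\rho^{i}$ directly; both yield $\tilde{\boldsymbol{\lambda}}_{\tp}^{i}=\tfrac12\tfrac{\rho^{i}}{m_{i}}(\tp^{i})^{-1}$ and the stated normalization. Your closing remark on positive definiteness is a genuine addition: the paper tacitly assumes integrability of the maximizer, while you correctly point out that this requires $\tp^{i}$ (equivalently $\boldsymbol{\Lambda}^{i}$) to be symmetric positive definite, which follows from its definition in \eqref{eq:Moments-Peculiar} as a mass-weighted covariance of the peculiar velocity rather than from the variational equation itself.
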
 

\begin{proof} 

The strategy of the proof is the same as in Theorem \ref{th:MEP-Local}. Here, it will be based upon the extended functional \eqref{eq:MEP-Extended_Peculiar} which we shall briefly denote as: 
\begin{equation*}
  \mathcal{H} = \intR \mathcal{L}\left( \vc, f^{i}, \nabla_{\vc}f^{i} \right) \dd \vc. 
\end{equation*}
Since $\mathcal{L}$ is independent of $\nabla_{\vc}f^{i}$, the necessary condition for extremum reduces to $\partial \mathcal{L}/\partial f^{i} = 0$, $i = 1, \ldots, S$, implying $S$ uncoupled equations whose solutions are 
\begin{equation} \label{eq:fi10-General}
  f^{i} = \frac{1}{b^{i}} \exp \left[ - \left( 1 + m_{i} \tilde{\lambda}_{\rho}^{i} 
    + m_{i} \sum_{k=1}^{3} \tilde{\lambda}_{u_{k}}^{i} c_{k}^{i} 
    + m_{i} \sum_{k,\ell=1}^{3} \tilde{\lambda}_{p_{k\ell}}^{i} c_{k}^{i} c_{\ell}^{i} 
    \right) \right]. 
\end{equation}
To determine the multipliers, we have to compute the moments \eqref{eq:MomentsPeculiar-DLess}, but their direct computation using \eqref{eq:fi10-General} is cumbersome. Therefore, the proof will be completed in two steps. 

First, let us prove that $\tilde{\lambda}_{u_{k}}^{i} = 0$, $i = 1, \ldots, S$, $k = 1, 2, 3$. To this end, we shall first rewrite \eqref{eq:fi10-General} as
\begin{equation} \label{eq:fi10-Direct}
  f^{i} = \frac{1}{b^{i}} \exp \left[ - \left( 1 + m_{i} \tilde{\lambda}_{\rho}^{i} 
    + m_{i} \tilde{\boldsymbol{\lambda}}_{\vu}^{iT} \vc^{i} 
    + m_{i} \vc^{iT} \tilde{\boldsymbol{\lambda}}_{\tp}^{i} \vc^{i} \right) \right], 
\end{equation}
where $\tilde{\boldsymbol{\lambda}}_{\vu}^{i} = \left\{ \tilde{\lambda}_{u_{k}}^{i} \right\}_{k=1}^{3}$ is the vector of momentum multipliers and $\tilde{\boldsymbol{\lambda}}_{\tp}^{i} = \left\{ \tilde{\lambda}_{p_{k\ell}}^{i} \right\}_{k,\ell=1}^{3}$ is the matrix of pressure tensor multipliers. Since $\tilde{\boldsymbol{\lambda}}_{\mathbf{p}}^{i}$ is a real symmetric matrix, it can be diagonalized by means of an orthogonal matrix $\mathbf{R}$ satisfying $\mathbf{R}^{T} \mathbf{R} = \mathbf{R} \mathbf{R}^{T}= \mathbf{I} $. Let us introduce the orthogonal transformation in the space of peculiar velocities
\begin{equation} \label{eq:OrthogonalTransformation}
  \mathbf{C}^{i} = \mathbf{R}^{i} \vc^{i} 
  \quad \Rightarrow \quad
  \vc^{i} = \mathbf{R}^{iT} \mathbf{C}^{i}. 
\end{equation}
This implies
\begin{equation} \label{eq:Transformed-CC}
  \vc^{iT} \tilde{\boldsymbol{\lambda}}_{\tp}^{i} \vc^{i} 
    = \mathbf{C}^{iT} \mathbf{R}^{i} \tilde{\boldsymbol{\lambda}}_{\mathbf{p}}^{i} 
      \mathbf{R}^{iT} \mathbf{C}^{i} 
    = \mathbf{C}^{iT} \tilde{\boldsymbol{\Lambda}}_{\tp}^{i} \mathbf{C}^{i} 
    = \sum_{k=1}^{3} \tilde{\Lambda}_{k}^{i} (C_{k}^{i})^{2}, 
\end{equation}
where we introduced the transformed diagonal matrix of multipliers
\begin{equation*}
  \tilde{\boldsymbol{\Lambda}}_{\tp}^{i} 
    = \mathbf{R}^{i} \tilde{\boldsymbol{\lambda}}_{\tp}^{i} \mathbf{R}^{iT} 
    = \operatorname{diag} \left\{ \tilde{\Lambda}_{k}^{i} \right\}_{k=1}^{3}. 
\end{equation*}
Further, by means of same transformation, we obtain
\begin{equation} \label{eq:Transformed-C}
  \tilde{\boldsymbol{\lambda}}_{\vu}^{iT} \vc^{i} 
    = \tilde{\boldsymbol{\lambda}}_{\vu}^{iT} \mathbf{R}^{iT} \mathbf{C}^{i} 
    = \tilde{\mathbf{L}}_{\vu}^{iT} \mathbf{C}^{i}
    = \sum_{k=1}^{3} \tilde{L}_{k}^{i} C_{k}^{i}, 
\end{equation}
where we introduced the transformed vector of multipliers  
\begin{equation} \label{eq:VelMultipliers-Transformed}
  \tilde{\mathbf{L}}_{\vu}^{i} 
    = \mathbf{R}^{i} \tilde{\boldsymbol{\lambda}}_{\vu}^{i} 
    = \left\{ \tilde{L}_{k}^{i} \right\}_{k=1}^{3}.
\end{equation}
Taking into account \eqref{eq:Transformed-CC} and \eqref{eq:Transformed-C}, the velocity distributions \eqref{eq:fi10-Direct} may be finally transformed into
\begin{align} \label{eq:fi10-Transformed}
  f^{i} & = \frac{1}{b^{i}} \exp \left[ - \left( 1 + m_{i} \tilde{\lambda}_{\rho}^{i} 
    + m_{i} \sum_{k=1}^{3} \tilde{L}_{k}^{i} C_{k}^{i} 
    + m_{i} \sum_{k=1}^{3} \tilde{\Lambda}_{k}^{i} (C_{k}^{i})^{2} \right) \right] 
  \nonumber \\
  & = \frac{1}{b^{i}} \exp \left[ - \left( 1 + m_{i} \tilde{\lambda}_{\rho}^{i} \right) \right] 
    \prod_{k=1}^{3} \exp \left[ - m_{i} \left( \tilde{L}_{k}^{i} C_{k}^{i} 
    + \tilde{\Lambda}_{k}^{i} (C_{k}^{i})^{2} \right) \right]. 
\end{align}

For the computation of constraints \eqref{eq:MomentsPeculiar-DLess} we have to take into account that $\dd \vc^{i} = \dd \mathbf{C}^{i}$, since Jacobian of the transformation \eqref{eq:OrthogonalTransformation} is $\vert \mathbf{J} \vert = \vert \mathbf{R}^{iT} \vert = 1$. In what follows, we shall compute the constraints \eqref{eq:MomentsPeculiar-DLess}$_{2}$. Applying \eqref{eq:OrthogonalTransformation}$_{2}$ in component form, $c_{n}^{i} = \sum_{j=1}^{3} R_{jn}^{i} C_{j}^{i}$, and taking  advantage of \eqref{eq:fi10-Transformed} which facilitates a simple application of Fubini's theorem, after straightforward computation we obtain 
\begin{align*} \label{eq:VelConstraint-Transformed}
  0 & = - \frac{m_{i}}{b^{i}} \exp \left[ - \left( 1 + m_{i} \tilde{\lambda}_{\rho}^{i} 
    \right) \right] \left( \frac{\pi}{m_{i}} \right)^{3/2} \left( \sum_{j=1}^{3} 
    R_{jn}^{i} \frac{\tilde{L}_{j}^{i}}{2 \tilde{\Lambda}_{j}^{i}} \right) 
  \nonumber \\
  & \quad \times \prod_{k=1}^{3} \frac{1}{\sqrt{\tilde{\Lambda}_{k}^{i}}} \exp \left[ 
    \frac{m_{i} (\tilde{L}_{k}^{i})^{2}}{4 \tilde{\Lambda}_{k}^{i}} \right].
\end{align*}
The only possibility to satisfy these constraints is to impose
\begin{equation*}
  \sum_{j=1}^{3} R_{jn}^{i} \frac{\tilde{L}_{j}^{i}}{2 \tilde{\Lambda}_{j}^{i}} = 0,
\end{equation*}
which amounts to a homogeneous system of linear algebraic equations. Since $\vert \mathbf{R}^{iT} \vert = \vert \mathbf{R}^{i} \vert = 1$, the matrix of coefficients is non-singular and there exists only a trivial solution
\begin{equation} \label{eq:VelMultipliers-Solution}
  \frac{\tilde{L}_{j}^{i}}{2 \tilde{\Lambda}_{j}^{i}} = 0 
  \quad \Rightarrow \quad 
  \tilde{L}_{j}^{i} = 0. 
\end{equation}
Using \eqref{eq:VelMultipliers-Transformed} and \eqref{eq:VelMultipliers-Solution}, by applying the same regularity arguments we arrive to
\begin{equation*}
  \tilde{\mathbf{L}}_{\vu}^{i} 
    = \mathbf{R}^{i} \tilde{\boldsymbol{\lambda}}_{\vu}^{i} 
    = \mathbf{0} 
  \quad \Rightarrow \quad 
  \tilde{\boldsymbol{\lambda}}_{\vu}^{i} = \mathbf{0},
\end{equation*}
which is equivalent to $\tilde{\lambda}_{u_{k}}^{i} = 0$. This result implies a simplified form of the velocity distribution functions \eqref{eq:fi10-General}
\begin{equation} \label{eq:fi10-Reduced}
  f^{i} = \frac{1}{b^{i}} \exp \left[ - \left( 1 + m_{i} \tilde{\lambda}_{\rho}^{i} 
	+ m_{i} \sum_{k,\ell=1}^{3} \tilde{\lambda}_{p_{k\ell}}^{i} c_{k}^{i} c_{\ell}^{i} 
	\right) \right]. 
\end{equation}

The second step consists in computation of the constraints \eqref{eq:MomentsPeculiar-DLess}$_{1}$ and \eqref{eq:MomentsPeculiar-DLess}$_{3}$ using \eqref{eq:fi10-Reduced}, which leads to
\begin{align} 
  \rho^{i} & = \frac{m_{i}}{b^{i}} \exp \left( - 1 - m_{i} \tilde{\lambda}_{\rho}^{i} \right) 
    \left( \frac{\pi}{m_{i}} \right)^{3/2} 
    \left( \det \tilde{\boldsymbol{\lambda}}_{\tp}^{i} \right)^{-1/2}, 
  \label{eq:fi10-rho} \\
  p_{k\ell}^{i} & = \frac{1}{2 b^{i}} \exp \left( - 1 - m_{i} \tilde{\lambda}_{\rho}^{i} \right) 
    \left( \frac{\pi}{m_{i}} \right)^{3/2} 
    \operatorname{adj}\left( \tilde{\boldsymbol{\lambda}}_{\tp}^{i} \right)_{k\ell}
    \left( \det \tilde{\boldsymbol{\lambda}}_{\tp}^{i} \right)^{-3/2}, 
  \label{eq:fi10-p_kl}
\end{align}
where $\operatorname{adj}\left( \mathbf{A} \right)$ denotes the classical adjoint of a matrix $\mathbf{A}$. Using \eqref{eq:fi10-rho} and taking into account that
\begin{equation*}
  \operatorname{adj}\left( \tilde{\boldsymbol{\lambda}}_{\tp}^{i} \right)_{k\ell}
    \left( \det \tilde{\boldsymbol{\lambda}}_{\tp}^{i} \right)^{-1} 
    = \left( \tilde{\boldsymbol{\lambda}}_{\tp}^{i} \right)^{-1}_{k\ell}, 
\end{equation*}
we can reduce \eqref{eq:fi10-p_kl} to
\begin{equation}
  p_{k\ell}^{i} = \frac{1}{2} \frac{\rho^{i}}{m_{i}} 
    \left( \tilde{\boldsymbol{\lambda}}_{\tp}^{i} \right)^{-1}_{k\ell} 
  \quad \Leftrightarrow \quad 
  \tp^{i} = \frac{1}{2} \frac{\rho^{i}}{m_{i}} 
    \left( \tilde{\boldsymbol{\lambda}}_{\tp}^{i} \right)^{-1},
\end{equation}
or, equivalently
\begin{equation} \label{eq:fi10-lambda_p}
  \tilde{\boldsymbol{\lambda}}_{\tp}^{i} = \frac{1}{2} \frac{\rho^{i}}{m_{i}} 
    \left( \tp^{i} \right)^{-1}. 
\end{equation} 
Using \eqref{eq:fi10-rho} and \eqref{eq:fi10-lambda_p} in \eqref{eq:fi10-Reduced}, after some direct computations one easily obtains \eqref{eq:fi-Local}, which completes the proof.
\end{proof} 

\begin{remark}
The result of Theorem \ref{th:MEP-Higher} presents a generalization of the 10 moments approximation of the velocity distribution function given in \cite{levermore1996moment} to the case of mixtures. We here provide the computations in the scaled form with a more detailed proof. 
\end{remark}

Theorem \ref{th:MEP-Higher} implies a simpler structure of the flux of momentum fluxes $P^{i}_{k\ell n}$. Indeed, if the velocity distribution function $f^i$ has the form \eqref{eq:fi-Local}, since $p^i_{k\ell n}$ is given by \eqref{eq:Moments-Peculiar}$_3$, a simple parity argument implies that $p^{i}_{k\ell n} = 0$. We thus have the following corollary.
\begin{corollary}
For the velocity distribution function determined by the higher-order approximation \eqref{eq:fi-Local}, 
 the scaled total flux of the momentum fluxes reads: 
\begin{equation} \label{eq:moments3}
  P^{i}_{k\ell n} = \alpha^{3} \rho^{i} u^{i}_{k} u^{i}_{\ell} u^{i}_{n} 
	+ \alpha \left( u^{i}_{k} p^{i}_{\ell n} + u^{i}_{\ell} p^{i}_{nk} 
	+ u^{i}_{n} p^{i}_{k\ell} \right).
\end{equation}
\end{corollary}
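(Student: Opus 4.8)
The plan is to combine the purely algebraic decomposition of $P^{i}_{k\ell n}$ into its convective and peculiar parts with the observation that the highest-order peculiar moment $p^{i}_{k\ell n}$ vanishes for the maximum-entropy distribution \eqref{eq:fi-Local}. So the argument splits into an algebraic step and a symmetry step, the second being the only one that uses the specific form of $f^{i}$.

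First I would record the scaled version of the third relation in \eqref{eq:Intrinsic}. Expanding $v_{n} = c^{i}_{n} + \alpha u^{i}_{n}$ (and similarly for the other indices) in the definition \eqref{eq:Moments-Higher}$_{2}$ of $P^{i}_{k\ell n}$, then using the definitions \eqref{eq:Moments-Peculiar} of the peculiar moments together with the vanishing of the first-order peculiar moment $\intR m_{i} c^{i}_{k} f^{i} \dd \vc = 0$ imposed by \eqref{eq:MomentsPeculiar-DLess}$_{2}$, one obtains
\begin{equation*}
  P^{i}_{k\ell n} = \alpha^{3} \rho^{i} u^{i}_{k} u^{i}_{\ell} u^{i}_{n} + \alpha \left( u^{i}_{k} p^{i}_{\ell n} + u^{i}_{\ell} p^{i}_{nk} + u^{i}_{n} p^{i}_{k\ell} \right) + p^{i}_{k\ell n}.
\end{equation*}
This is the scaled counterpart of \eqref{eq:Intrinsic}$_{3}$, obtained through the substitution $u^{i}_{k} \mapsto \alpha u^{i}_{k}$ already employed throughout the scaled formulation. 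This step is entirely algebraic and requires nothing about the particular shape of $f^{i}$.

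The key step is to show that $p^{i}_{k\ell n} = 0$. Here I would exploit the explicit form \eqref{eq:fi-Local}, which depends on $\vc^{i}$ only through the quadratic form ${\vc^{i}}^{T} (\tp^{i})^{-1} \vc^{i}$ and is therefore even, $f^{i}(-\vc^{i}) = f^{i}(\vc^{i})$. By the definition \eqref{eq:Moments-Peculiar}$_{3}$, the integrand of $p^{i}_{k\ell n} = \intR m_{i} c^{i}_{k} c^{i}_{\ell} c^{i}_{n} f^{i} \dd \vc$ is a product of three components of $\vc^{i}$, hence an odd function of $\vc^{i}$. Applying the change of variables $\vc^{i} \mapsto -\vc^{i}$, whose Jacobian has absolute value $1$ and which maps $\mathbb{R}^{3}$ onto itself, the integral equals its own negative and thus vanishes. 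Convergence of the underlying Gaussian integral is guaranteed by the positive-definiteness of $\tp^{i}$, which makes $(\tp^{i})^{-1}$ positive-definite.

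Substituting $p^{i}_{k\ell n} = 0$ into the decomposition above yields exactly \eqref{eq:moments3}, completing the argument. I do not expect any genuine obstacle in this proof: the only mildly delicate points are the positive-definiteness of $\tp^{i}$ needed for integrability and the evenness of $f^{i}$ in $\vc^{i}$, both of which follow immediately from the structure of \eqref{eq:fi-Local}. The heart of the matter is the elementary parity argument already flagged in the text preceding the statement.
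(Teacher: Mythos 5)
Your proof is correct and follows essentially the same route as the paper: the scaled analogue of the decomposition \eqref{eq:Intrinsic}$_{3}$ (via $u^{i}_{k}\mapsto\alpha u^{i}_{k}$) combined with the parity argument showing $p^{i}_{k\ell n}=0$ for the even distribution \eqref{eq:fi-Local}. You merely spell out the details (vanishing of the first-order peculiar moments, positive-definiteness of $\tp^{i}$ for integrability) that the paper leaves implicit.
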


\section{Higher-order Maxwell--Stefan model}
\label{sec:Higher-order}

In what follows, we shall derive the higher-order Maxwell--Stefan model of diffusion. It will be based upon higher-order moment equations in dimensionless form, keeping up the scaling provided by the general form \eqref{eq:MomentEq-Diffusive}. They will provide a clear order of magnitude estimate for all the ingredients in moment equations, and thus make further approximations formally consistent. Our aim is to include viscous dissipation by means of moment equations for the momentum fluxes, and analyze different levels of approximation which may be used to describe diffusion processes. 

\subsection{The moment equations}

The set of moment equations which we need for generalization of the Maxwell--Stefan model consists of the mass, momentum, momentum flux and energy balance laws for the species (Theorem \ref{th:Moment-Equations}). In the so-called asymptotic limit, relevant to the derivation of Maxwell--Stefan equations in usual form, we shall need only the energy conservation law for the mixture (Proposition \ref{th:Energy-conservation}), rather than balance laws for each species. 

\begin{theorem} \label{th:Moment-Equations}
The Boltzmann equations \eqref{eq:Boltzmann-Diffusive} formally lead to the following set of moment equations, consisting for each species in
\begin{itemize}
 \item the mass conservation equations
\begin{equation}\label{eq:moment_mass}
\alpha\Big(\partial_t\rho^i + \sum_{k=1}^3 \partial_{x_k} ( \rho^i u^i_k ) \Big)= 0, \qquad  1\leq i \leq S.
\end{equation}
\item the momentum balance laws
\begin{multline}\label{eq:moment_momentum}
 \alpha^2\left[ \partial_t (\rho^i u^i_\ell) +\sum_{k=1}^3 \partial_{x_k} ( \rho^i u^i_k u^i_\ell) \right] +\sum_{k=1}^3 \partial_{x_k} p^i_{k\ell} \\
  =\sum_{j=1}^S   \frac{2\pi \|b^{ij}\|_{L^1}}{m_i+m_j} \rho^i \rho^j (u^j_\ell - u^i_\ell), \qquad  1\leq \ell\leq 3,~1\leq i \leq S.
\end{multline}
\item the momentum flux balance laws
\begin{multline}\label{eq:moment_pressure}
 \partial_t (\alpha^3 \rho^i u^i_k u^i_\ell+\alpha p^i_{k\ell}) \\
 +\sum_{n=1}^3 \partial_{x_n} (\alpha^3 \rho^i u^i_k u^i_\ell u^i_n+ \alpha (u^i_k p^i_{\ell n} + u^i_\ell p^i_{nk} + u^i_n p^i_{k\ell})) \\
 = \sum_{j=1}^S\Bigg(\frac{2\pi \|b^{ij}\|_{L^1}}{(m_i+m_j)^2} \Big\{ -(2m_i+m_j)\rho^j(\alpha\rho^i u^i_k u^i_\ell + \frac1\alpha p^i_{k\ell})\\
  +m_i \alpha\rho^i \rho^j (u^i_k u^j_\ell + u^i_\ell u^j_k ) + m_j \rho^i(\alpha\rho^j u^j_k u^j_\ell +\frac1\alpha p^j_{k\ell})  \Big\}\\
  + \frac{m_j A_{k\ell}}{(m_i+m_j)^2} \Big(\alpha \rho^i\rho^j \vert\vu^i-\vu^j\vert^2 +\frac3\alpha \rho^j p^i +\frac3\alpha \rho^i p^j \Big)\Bigg) , \qquad \\
  1\leq k,\ell\leq 3, ~1\leq i \leq S.
\end{multline}
\end{itemize}
Moreover, the balance laws \eqref{eq:moment_pressure} imply the following energy balance law for each species
\begin{multline}\label{eq:moment_energy}
  \partial_t (\alpha^3 \rho^i \vert \vu^i\vert^2 +3\alpha p^i) \\
 +\sum_{n=1}^3 \partial_{x_n} (\alpha^3 \rho^i \vert \vu^i\vert^2  u^i_n+ \alpha (2\sum_{k=1}^3 u^i_k p^i_{k n}  + 3u^i_n p^i)) \\
=\sum_{j=1}^S\frac{2\pi \|b^{ij}\|_{L^1}}{(m_i+m_j)^2} \Big\{  \frac1\alpha\left(- 6 m_i\rho^j p^i + 6m_j \rho^i p^j\right)\\\
+\alpha \rho^i\rho^j \left[ -2m_i \vert\vu^i\vert^2 + 2(m_i-m_j) \vu^i\cdot \vu^j +2 m_j  \vert \vu^j\vert^2 \right] \Big\}, \\
  \qquad   ~1\leq i \leq S.
\end{multline}
\end{theorem}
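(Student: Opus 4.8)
The plan is to specialize the general scaled moment equation \eqref{eq:MomentEq-Diffusive} to the four test functions $\psi^i=m_i$, $\psi^i=m_i v_\ell$, $\psi^i=m_i v_k v_\ell$, and (for the energy law) $\psi^i=m_i|\vv|^2$, and in each case to evaluate the transport terms on the left and the collision integrals on the right separately. The left-hand sides are purely kinematic. Inserting $\psi^i=m_i$ gives $\intR m_i f^i\dd\vv=\rho^i$ and $\intR m_i v_k f^i\dd\vv=\alpha\rho^i u^i_k$ by the scaled definitions, so \eqref{eq:MomentEq-Diffusive} collapses at once to \eqref{eq:moment_mass}. For $\psi^i=m_i v_\ell$ the flux term is $\sum_k\partial_{x_k}P^i_{k\ell}$, which I would expand with the identity $P^i_{k\ell}=\alpha^2\rho^i u^i_k u^i_\ell+p^i_{k\ell}$ from \eqref{eq:Intrinsic}; together with the $\alpha$ on the time derivative this reproduces the left side of \eqref{eq:moment_momentum}. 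For $\psi^i=m_i v_k v_\ell$ the time term is $\alpha\partial_t P^i_{k\ell}$ and the flux term is $\sum_n\partial_{x_n}P^i_{k\ell n}$; here the third-order moment is closed by the Corollary to Theorem~\ref{th:MEP-Higher}, whose parity argument removes $p^i_{k\ell n}$ and gives \eqref{eq:moments3}. Substituting this and distributing the powers of $\alpha$ yields the left side of \eqref{eq:moment_pressure}.

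The substantive work is the collision (right-hand) sides. For each $\psi^i$ I would pass to the weak form \eqref{eq:weak_form}, so that only the jump $\psi^i(\vv')-\psi^i(\vv)$ enters, and then use the elastic collision rule for species of masses $m_i,m_j$ to express this jump through the relative velocity and the unit vector $\boldsymbol\sigma$. The Maxwell-molecule assumption \eqref{eq:Maxwell_molecules} is decisive here: since $b^{ij}$ depends only on the deflection angle, the angular integration over $\boldsymbol\sigma$ decouples from the velocity integration and contributes the constants $\|b^{ij}\|_{L^1}$ and $A_{k\ell}$. For $\psi^i=m_i$ the jump vanishes identically, giving the zero right-hand side of \eqref{eq:moment_mass} via \eqref{eq:conservation_collision}. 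For $\psi^i=m_i v_\ell$ the jump is linear, and after angular integration the remaining velocity integral collapses onto the first moments, producing $\frac{2\pi\|b^{ij}\|_{L^1}}{m_i+m_j}\rho^i\rho^j(u^j_\ell-u^i_\ell)$; the $1/\alpha$ prefactor is exactly absorbed by the factor $\alpha$ carried by each scaled mean velocity $\intR m_i v_\ell f^i\dd\vv=\alpha\rho^i u^i_\ell$.

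The hard step is the quadratic test function $\psi^i=m_i v_k v_\ell$, where the jump is bilinear and the velocity integral produces second-order moments of both species. The key simplification is the special property of Maxwell molecules that the collision integral of a moment of degree two is expressible through moments of degree at most two; consequently only $\rho^i,\rho^j$, the mean velocities, and the pressure tensors $p^i_{k\ell},p^j_{k\ell}$ (with their traces $p^i,p^j$) appear, and no unknown higher moments survive—so the right side closes without invoking the Gaussian ansatz, which is needed only for the third-order \emph{flux} on the left. Careful bookkeeping of the scaling—recalling that $u^i_k$ carries a factor $\alpha$ while $p^i_{k\ell}$ does not—then redistributes the powers of $\alpha$ against the $1/\alpha$ prefactor to generate the mixed convective terms, the $\frac1\alpha p^i_{k\ell}$ terms, and the pressure/temperature relaxation terms weighted by $A_{k\ell}$ in \eqref{eq:moment_pressure}.

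Finally, the energy balance \eqref{eq:moment_energy} is not an independent computation: I would obtain it by contracting \eqref{eq:moment_pressure} over $k=\ell$ and summing, using $\sum_k p^i_{kk}=3p^i$ from \eqref{eq:Partial-Pressure} and $\sum_k u^i_k u^i_k=|\vu^i|^2$, which folds the transport and source terms into the stated form. I expect the quadratic collision integral to be the principal obstacle, both because the mass-dependent elastic kinematics must be tracked component-by-component through the angular integration, and because one must verify explicitly that every contribution above second order genuinely cancels so that the source closes on $\rho,\vu,\tp^i$ alone.
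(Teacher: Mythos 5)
Your proposal is correct and follows essentially the same route as the paper: the same four test functions (with the energy law obtained, as in the paper, by contracting the momentum-flux balance over $k=\ell$ rather than as an independent computation), the weak form \eqref{eq:weak_form} with the elastic collision rules for the source terms, the Maxwell-molecule angular integrals producing $\|b^{ij}\|_{L^1}$ and $A_{k\ell}$, and the closure \eqref{eq:moments3} only for the third-order flux on the left. Your observation that the quadratic collision integral closes on moments of degree at most two without invoking the Gaussian ansatz is exactly what the paper's explicit separation-of-variables computation establishes.
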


\begin{proof}
We prove each of the balance laws separately, by taking moments of the Boltzmann equations.
 
\noindent $\bullet$ In a very standard way, mass balances are obtained for each species from \eqref{eq:MomentEq-Diffusive} with $\psi^i(\vv) = m_i$,  using the moments of the distribution functions~\eqref{eq:MomentsHigher-DLess0}-\eqref{eq:MomentsHigher-DLess1} and the conservation properties of the collision kernel~\eqref{eq:conservation_collision} to obtain~\eqref{eq:moment_mass}.\smallskip

\noindent $\bullet$  For the momentum balances, the computations are again standard~\cite{BGS}. We choose $\psi^i(\vv) = m_i v_\ell$ in \eqref{eq:MomentEq-Diffusive} for any $1\leq \ell\leq 3$,  and use \eqref{eq:MomentsHigher-DLess1}-\eqref{eq:MomentsHigher-DLess2}, which leads to
\[\alpha \partial_t (\alpha \rho^i u^i_\ell) +\sum_{k=1}^3 \partial_{x_k} (\alpha^2 \rho^i u^i_k u^i_\ell+p^i_{k\ell}) = \frac1\alpha \sum_{j=1}^S \intR m_i v_\ell Q^{ij}(f^i,f^j)(\vv) \dd \vv.\]
Let us compute the term in the right hand side of this last equation. Using the weak form~\eqref{eq:weak_form} with $\phi(\vv) = m_{i} v_\ell$, we obtain 
\begin{multline*}
\mathcal M^{ij}_\ell:= \intR m_i v_\ell Q^{ij}(f^i,f^j)(\vv) \dd \vv\\
  = \frac{m_im_j}{m_i+m_j} \intR\intR f^i(\vv) f^j(\vv_*) ({v_*}_\ell - v_\ell)  \dd\vv_*\dd\vv \intS b^{ij}(\cos\theta) \dd\sigma \\
  + \frac{m_im_j}{m_i+m_j} \intR\intR f^i(\vv) f^j(\vv_*) \vert\vv-\vv_*\vert \dd\vv_*\dd\vv \intS b^{ij}(\cos\theta) \sigma_\ell\dd\sigma , 
\end{multline*}
since 
\[v'_\ell - v_\ell = \frac{m_j}{m_i+m_j} ({v_*}_\ell - v_\ell) + \vert\vv-\vv_*\vert  \sigma_\ell. \]
The two integrals in $\sigma$ are computed using spherical coordinates~\cite{BGS}, and it leads to 
\begin{multline*}
 \mathcal M^{ij}_\ell
  = \frac{2\pi}{m_i+m_j} \|b^{ij}\|_{L^1}\intR\intR m_i f^i(\vv) m_j f^j(\vv_*) ({v_*}_\ell - v_\ell) \dd\vv_*\dd\vv\\
  = \alpha \frac{2\pi}{m_i+m_j} \|b^{ij}\|_{L^1} (\rho^i \rho^j u^j_\ell - \rho^j \rho^i u^i_\ell).
\end{multline*}

The momentum balance laws thus become~\eqref{eq:moment_momentum}.\smallskip

\noindent $\bullet$ To compute the momentum flux balances we choose $\psi^i(\vv) = m_i v_kv_\ell$. From \eqref{eq:MomentEq-Diffusive} for any $1\leq k,\ell \leq 3$, using \eqref{eq:MomentsHigher-DLess2}-\eqref{eq:moments3}, one obtains:
\begin{multline*}
 \alpha \partial_t (\alpha^2 \rho^i u^i_k u^i_\ell+p^i_{k\ell}) +\sum_{n=1}^3 \partial_{x_n} (\alpha^3 \rho^i u^i_k u^i_\ell u^i_n+ \alpha (u^i_k p^i_{\ell n} + u^i_\ell p^i_{nk} + u^i_n p^i{k\ell})) \\
 = \frac1\alpha \sum_{j=1}^S \intR m_i v_k v_\ell Q^{ij}(f^i,f^j)(\vv) \dd \vv. 
\end{multline*}

Again, let us focus on the computation fo the source term, using again the weak form~\eqref{eq:weak_form} with $\phi(\vv) = m_{i} v_kv_\ell$
\begin{multline}\label{eq:Qijell}
 \mathcal Q^{ij}_{k\ell}:= \intR m_i v_kv_\ell Q^{ij}(f^i,f^j)(\vv) \dd \vv\\
 = \intR\intR \intS m_i b^{ij}(\cos \theta) f^i(\vv) f^j(\vv_*) (v'_k v'_\ell - v_kv_\ell) \dd\sigma \dd\vv_*\dd\vv\\
 =\intR\intR \int_0^{2\pi}\int_0^\pi  m_i  f^i(\vv) f^j(\vv_*) (v'_k v'_\ell - v_kv_\ell)b^{ij}(\cos \theta) \sin\theta\dd\theta \dd\varphi   \dd\vv_*  \dd\vv
\end{multline}
where the last equality is the change to spherical coordinates. Using the collision rules, the term $v'_k v'_\ell$ becomes
\begin{multline}\label{eq:vprimek_vprimel}
  v'_k v'_\ell = \frac{1}{(m_i+m_j)^2} (m_i v_k + m_j{v_*}_k + m_j \vert\vv-\vv_*\vert  \sigma_k)  (m_i v_\ell + m_j{v_*}_\ell + m_j \vert\vv-\vv_*\vert  \sigma_\ell)\\
  =\frac{1}{(m_i+m_j)^2} \Big\{ m_i^2 v_kv_\ell + m_im_j (v_k {v_*}_\ell + v_\ell {v_*}_k) +m_j^2{v_*}_k{v_*}_\ell \\
  +  m_j \vert\vv-\vv_*\vert \big( m_i v_k \sigma_\ell + m_j{v_*}_k \sigma_\ell+ m_i v_\ell\sigma_k + m_j {v_*}_\ell\sigma_k \big)\\
  + m_j^2 \vert\vv-\vv_*\vert^2 \sigma_k\sigma_\ell   \Big\}
\end{multline}
Let us first show that by parity arguments, the terms of the last but one line in the previous expression are all zero when integrated as in~\eqref{eq:Qijell}.
Indeed, since all terms involving the velocities do not depend on $\theta$ nor $\varphi$, we have to handle terms of the form
\[\int_0^{2\pi}\int_0^\pi  b^{ij}(\cos \theta) \sin\theta  \sigma_\ell \dd\theta \dd\varphi.\]
These terms can be computed for $\ell=1,2,3$, with $\sigma_1 = \sin\theta \cos\varphi$, $\sigma_2 = \sin\theta\sin\varphi$ and $\sigma_3 = \cos\theta$. The terms for $\ell=1,2$ are obviously zero, because of the periodicity of trigonometric functions in the integration in $\varphi$.
For $\ell=3$, we use the change of variables $\eta = \cos\theta$ to obtain
\[\int_0^{2\pi}\int_0^\pi  b^{ij}(\cos \theta) \sin\theta\cos\theta \dd\theta \dd\varphi = 2\pi \int_{-1}^1 \eta b^{ij}(\eta) \dd\eta = 0,\]
since the function $b^{ij}$ is even.

Now, let us handle the integration of the last term in~\eqref{eq:vprimek_vprimel}.
The only terms depending on $\sigma$ are of the form
\[A_{k\ell} :=\int_0^{2\pi}\int_0^\pi  b^{ij}(\cos \theta) \sin\theta \sigma_k\sigma_\ell \dd\theta \dd\varphi.\]
With the same argument as before, the integration in $\varphi$ leads to zero for the terms with $k\neq \ell$. It remains to handle the terms $A_{kk}$.
We have, using trigonometry relations and the same change of variables as before
\begin{multline*}
 A_{11} = \int_0^\pi  b^{ij}(\cos \theta) \sin^3\theta  \dd\theta \int_0^{2\pi} \cos^2\varphi \dd\varphi \\
 =   \int_0^\pi  b^{ij}(\cos \theta) \sin^3\theta  \dd\theta \int_0^{2\pi} \frac{1+ \cos(2\varphi)}2 \dd\varphi \\
 =\pi \int_{-1}^1 b^{ij}(\eta) (1-\eta^2) \dd \eta = \pi \left(\|b^{ij}\|_{L^1} - B^{ij}\right),
\end{multline*}
where we defined $B^{ij} :=  \int_{-1}^1 \eta^2 b^{ij} (\eta)  \dd \eta $.
With the same reasoning, we can prove that $A_{22} =  \pi \left(\|b^{ij}\|_{L^1} - B^{ij}\right)$.
For $A_{33}$, we have, using again the same change of variables
\[A_{33} =  \int_0^\pi  b^{ij}(\cos \theta) \sin \theta \cos^2\theta \dd\theta \int_0^{2\pi} \dd\varphi =2\pi \int_{-1}^1 \eta^2 b^{ij}(\eta) \dd \eta = 2\pi B^{ij}. 
\]
Therefore, we may summarize:
\begin{equation}
  A_{k\ell} = 
  \begin{cases}
  \pi \left(\|b^{ij}\|_{L^1} - B^{ij}\right), & k = \ell = 1,2; \\
  2\pi B^{ij}, & k = \ell = 3; \\
  0, & k \neq \ell. 
  \end{cases}
\end{equation}

It remains to handle the terms involving only the velocities in~\eqref{eq:vprimek_vprimel}. They all have a common multiplicative factor
\[\int_0^\pi  b^{ij}(\cos \theta) \sin\theta  \dd\theta \int_0^{2\pi} \dd\varphi =2\pi \|b^{ij}\|_{L^1}.\]
Thus, \eqref{eq:Qijell} becomes
\begin{multline*}
 \mathcal Q^{ij}_{k\ell}   =\frac{2\pi \|b^{ij}\|_{L^1}}{(m_i+m_j)^2}\intR\intR  m_i  f^i(\vv) f^j(\vv_*) \\
 \times \Big( m_i^2 v_kv_\ell + m_im_j (v_k {v_*}_\ell + v_\ell {v_*}_k) +m_j^2{v_*}_k{v_*}_\ell 
 - (m_i+m_j)^2 v_kv_\ell\Big)  \dd\vv_*  \dd\vv\\
 +\frac{m_j^2}{(m_i+m_j)^2}\intR\intR  m_i  f^i(\vv) f^j(\vv_*)  (\vert\vv\vert^2 +\vert\vv_*\vert^2-2\vv\cdot\vv_*) A_{kk}\delta_{k\ell} \dd\vv_*  \dd\vv
\end{multline*}
Now, in each term, the variables $\vv$ and $\vv_*$ can be separated. Combined with the moments~\eqref{eq:MomentsHigher-DLess0}-\eqref{eq:MomentsHigher-DLess2} of the distribution functions, it leads to
\begin{multline*}
  \mathcal Q^{ij}_{k\ell}  = \frac{2\pi \|b^{ij}\|_{L^1}}{(m_i+m_j)^2} \Big\{ -(2m_i+m_j)\rho^j(\alpha^2\rho^i u^i_k u^i_\ell + p^i_{k\ell})\\
  +m_i \alpha^2\rho^i \rho^j (u^i_k u^j_\ell + u^i_\ell u^j_k ) + m_j \rho^i(\alpha^2\rho^j u^j_k u^j_\ell + p^j_{k\ell})  \Big\}\\
  + \frac{m_j}{(m_i+m_j)^2} {A_{k\ell}} \Big(\rho^j(\alpha^2 \rho^i\vert\vu^i\vert^2 +\sum_{n=1}^3 p^i_{nn}) + \rho^i(\alpha^2 \rho^j\vert\vu^j\vert^2 +\sum_{n=1}^3 p^j_{nn})\\
  -2 \alpha^2 \rho^i \rho^j \vu^i\cdot\vu^j \Big) 
\end{multline*}
This can also be rewritten as
\begin{multline*}
  \mathcal Q^{ij}_{k\ell}  = \frac{2\pi \|b^{ij}\|_{L^1}}{(m_i+m_j)^2} \Big\{ -(2m_i+m_j)\rho^j(\alpha^2\rho^i u^i_k u^i_\ell + p^i_{k\ell})\\
  +m_i \alpha^2\rho^i \rho^j (u^i_k u^j_\ell + u^i_\ell u^j_k ) + m_j \rho^i(\alpha^2\rho^j u^j_k u^j_\ell + p^j_{k\ell})  \Big\}\\
  + \frac{m_j}{(m_i+m_j)^2}{A_{k\ell}} \Big(\alpha^2 \rho^i\rho^j \vert\vu^i-\vu^j\vert^2 +3\rho^j p^i +3\rho^i p^j \Big) 
\end{multline*}

Finally, this leads to the balance law~\eqref{eq:moment_pressure}.\smallskip

\noindent $\bullet$ Finally, to derive the energy balance laws we shall take into account the assumption that all the species are monatomic gases, and that relations \eqref{eq:Moments-Peculiar}$_{1}$ hold. Thus, energy equations will be derived starting from the balance laws \eqref{eq:moment_pressure}, taking into account $\sum_{k=1}^{3} p_{kk}^{i} = 3 p^{i}$, choosing $\ell=k$ and summing \eqref{eq:moment_pressure} over $k$ to obtain:
\begin{multline}\label{eq:energy}
  \partial_t (\alpha^4 \rho^i \vert \vu^i\vert^2 +3\alpha^2 p^i) \\
 +\sum_{n=1}^3 \partial_{x_n} (\alpha^4 \rho^i \vert \vu^i\vert^2  u^i_n+ \alpha^2 (2\sum_{k=1}^3 u^i_k p^i_{k n}  + 3u^i_n p^i)) \\
 = \sum_{j=1}^S\sum_{k=1}^3 \mathcal Q^{ij}_{kk} = \sum_{j=1}^S\frac{2\pi \|b^{ij}\|_{L^1}}{(m_i+m_j)^2} \Big\{ -(2m_i+m_j)\rho^j(\alpha^2\rho^i \vert \vu^i\vert^2 + 3p^i)\\
  +2m_i \alpha^2\rho^i \rho^j \vu^i\cdot \vu^j  + m_j \rho^i(\alpha^2\rho^j \vert \vu^j\vert^2 + 3p^j) \\
  + m_j\Big(\alpha^2 \rho^i\rho^j \vert\vu^i-\vu^j\vert^2 +3\rho^j p^i +3\rho^i p^j \Big) \Big\} ,
\end{multline}
where we used that $\sum_{k=1}^3 A_{kk} = 2\pi \|b^{ij} \|_{L^1}$.

Let us now compute the term inside the brace in the right-hand side of the previous relation.
\begin{multline*}
 -(2m_i+m_j)\rho^j(\alpha^2\rho^i \vert \vu^i\vert^2 + 3p^i)  +2m_i \alpha^2\rho^i \rho^j \vu^i\cdot \vu^j  + m_j \rho^i(\alpha^2\rho^j \vert \vu^j\vert^2 + 3p^j) \\
  + m_j\Big(\alpha^2 \rho^i\rho^j \vert\vu^i-\vu^j\vert^2 +3\rho^j p^i +3\rho^i p^j \Big)\\
  = \alpha^2 \rho^i\rho^j \left[ -2m_i \vert\vu^i\vert^2 + 2(m_i-m_j) \vu^i\cdot \vu^j +2 m_j  \vert \vu^j\vert^2 \right]\\
  - 6 m_i\rho^j p^i + 6m_j \rho^i p^j.
\end{multline*}
This implies the balance law~\eqref{eq:moment_energy}.
\end{proof}

The energy conservation law for the mixture is a consequence of the energy balances \eqref{eq:moment_energy} and the properties of the collision operator. However, to write it in a form more similar to the usual macroscopic equations, one needs to define appropriate mixture variables \cite{truesdell1984rational,ruggeri2007hyperbolic}. To that end, we define the mass density $\rho$, momentum density $\rho u_{k}$, internal energy density $\rho \varepsilon$, pressure tensor $p_{kn}$ and internal energy flux $q_{n}$, $1\leq k,n\leq 3$ of the mixture in dimensionless form: 
\begin{align} \label{eq:Mixture-DensitiesFluxes}
  \rho & := \sum_{i=1}^{S} \rho^{i}, \quad 
	\rho u_{k} := \sum_{i=1}^{S} \rho^{i} u^{i}_{k}, 
  \nonumber \\ 
  \rho \varepsilon & := \sum_{i=1}^{S} \rho^{i} \varepsilon^{i} 
	+ \alpha^{2} \sum_{i=1}^{S} \frac{1}{2} \rho^{i} \vert \vu^{i} - \vu \vert^{2}, 
  \nonumber \\ 
  p_{kn} & := \sum_{i=1}^{S} p_{kn}^{i} 
    + \alpha^{2} \sum_{i=1}^{S} \rho^{i} (u_{k}^{i} - u_k) (u_{n}^{i}-u_n), 
  \nonumber \\ 
  q_{n} & := \sum_{i=1}^{S} \left( \rho^{i} \varepsilon^{i} 
    + \alpha^{2} \frac{1}{2} \rho^{i} \vert \vu^{i} - \vu\vert^2 \right) (u_{n}^{i}-u_n)
    + \sum_{i=1}^{S} \sum_{k=1}^{3} p_{kn}^{i} (u_{k}^{i}-u_k) .
\end{align}
We can the state the energy conservation law for the mixture.

\begin{proposition} \label{th:Energy-conservation}
 The energy balance law \eqref{eq:moment_energy} also implies a conservation law for the whole mixture
 \begin{multline} \label{eq:claw_energy}
  \alpha^{3} \partial_t \left( \sum_{i=1}^{S} \frac{1}{2} \rho^i \vert \vu^i\vert^2 \right) + \alpha \partial_t \left( \frac{3}{2} \sum_{i=1}^{S} p^i \right)   + \alpha^{3} \sum_{n=1}^3 \partial_{x_n} \left( \sum_{i=1}^{S} \frac{1}{2} \rho^i \vert \vu^i\vert^2  u^i_n \right) \\
+ \alpha \sum_{n=1}^3 \partial_{x_n} \left( 	\sum_{i=1}^{S} \sum_{k=1}^3 u^i_k p^i_{k n}  + \frac{3}{2} \sum_{i=1}^{S} u^i_n p^i \right) 	= 0.
\end{multline}
It can be rewritten under the following form
\begin{multline} \label{eq:claw_energy-traditional}
  \partial_t \left( \alpha^{3} \rho \vert \vu \vert^2 + 2 \alpha \rho \varepsilon \right)  \\
  + \sum_{n=1}^3 \partial_{x_n} \left\{ \left[ \alpha^{3} \rho \vert \vu \vert^{2} 
    + 2 \alpha \rho \varepsilon \right] u_{n} 
    + 2 \alpha \sum_{k=1}^{3} p_{kn} u_{k} + 2 \alpha q_{n} \right\} = 0.
\end{multline}
\end{proposition}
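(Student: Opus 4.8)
The plan is to derive \eqref{eq:claw_energy} by summing the species energy balance \eqref{eq:moment_energy} over $i$, and then to obtain \eqref{eq:claw_energy-traditional} as a purely algebraic rewriting in terms of the mixture variables \eqref{eq:Mixture-DensitiesFluxes}. So the argument splits naturally into a ``collision-cancellation'' step and a ``change-of-variables'' step.

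First I would sum \eqref{eq:moment_energy} over $i = 1, \ldots, S$. The left-hand side sums termwise, and the resulting time-derivative and flux terms are exactly twice those appearing in \eqref{eq:claw_energy}. The crux is that the double sum on the right-hand side vanishes. To see this, I would use the symmetry $\|b^{ij}\|_{L^1} = \|b^{ji}\|_{L^1}$ of the collision cross-sections, together with the symmetry of the prefactor $2\pi\|b^{ij}\|_{L^1}/(m_i+m_j)^2$ under $i \leftrightarrow j$. Relabelling the summation indices $i \leftrightarrow j$ in the double sum and adding the result to the original, one checks that both the $1/\alpha$ contribution $-6m_i\rho^j p^i + 6m_j\rho^i p^j$ and the $\alpha$ bracket $-2m_i|\vu^i|^2 + 2(m_i-m_j)\vu^i\cdot\vu^j + 2m_j|\vu^j|^2$ are antisymmetric, hence cancel pairwise. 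This antisymmetry is the macroscopic imprint of the conservation of total momentum and energy in each binary collision. Dividing the surviving left-hand side by $2$ yields \eqref{eq:claw_energy}.

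Next, to get \eqref{eq:claw_energy-traditional}, I would multiply \eqref{eq:claw_energy} by $2$ and insert the definitions \eqref{eq:Mixture-DensitiesFluxes}. Three elementary identities drive the computation: the mixture-level peculiar-velocity decomposition $\sum_i \rho^i |\vu^i|^2 = \rho|\vu|^2 + \sum_i \rho^i |\vu^i - \vu|^2$, which follows from $\sum_i \rho^i(\vu^i - \vu) = \mathbf{0}$; the monatomic relation $3p^i = 2\rho^i \varepsilon^i$, giving $\frac{3}{2}\sum_i p^i = \sum_i \rho^i \varepsilon^i$; and the defining expressions for $\rho\varepsilon$, $p_{kn}$ and $q_n$. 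The time-derivative terms then reorganise immediately into $\partial_t(\alpha^3 \rho|\vu|^2 + 2\alpha\rho\varepsilon)$, since the $\alpha^2$-weighted fluctuation term in $\rho\varepsilon$ supplies precisely the $\alpha^3 \sum_i \rho^i |\vu^i - \vu|^2$ needed to complete the kinetic energy.

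For the flux terms I would write $\vu^i = \vu + (\vu^i - \vu)$ and expand $\sum_i \rho^i |\vu^i|^2 u^i_n$, $\sum_{i,k} u^i_k p^i_{kn}$ and $\frac{3}{2}\sum_i u^i_n p^i$. Every term linear in the diffusion velocity $\vu^i - \vu$ is annihilated by $\sum_i \rho^i(\vu^i - \vu) = \mathbf{0}$, and the remaining quadratic and cubic terms regroup precisely into the convective flux $[\alpha^3 \rho|\vu|^2 + 2\alpha\rho\varepsilon]u_n$, the pressure-work term $2\alpha\sum_k p_{kn} u_k$, and the heat flux $2\alpha q_n$. I expect this bookkeeping to be the main obstacle: it is routine but lengthy, and the delicate point is to track the $\alpha^2$ factors hidden in the definitions of $\rho\varepsilon$, $p_{kn}$ and $q_n$, so that the $\alpha^3$ kinetic-energy-flux contributions and the $\alpha$ internal-energy contributions are matched separately and consistently.
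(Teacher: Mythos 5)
Your proposal is correct and follows essentially the same route as the paper: the summation over species with cancellation of the source terms via the $i\leftrightarrow j$ antisymmetry (which the paper phrases as $\sum_k\mathcal Q^{ii}_{kk}=0$ and $\sum_k\mathcal Q^{ij}_{kk}+\sum_k\mathcal Q^{ji}_{kk}=0$), followed by the algebraic rewriting in the mixture variables \eqref{eq:Mixture-DensitiesFluxes} using $3p^i=2\rho^i\varepsilon^i$ and $\sum_i\rho^i(\vu^i-\vu)=\mathbf{0}$. Your bookkeeping of the $\alpha$-weights in the energy density and flux matches the paper's identities exactly.
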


\begin{proof}
From the computation of the source term $\mathcal Q^{ij}_{k\ell}$, we can  deduce the following relations using symmetry with respect to $i$ and $j$
\[\sum_{k=1}^3 \mathcal Q^{ii}_{kk} = 0, \qquad\qquad \sum_{k=1}^3 \mathcal Q^{ij}_{kk}  + \sum_{k=1}^3 \mathcal Q^{ji}_{kk}  =0.\]
Therefore, summation of \eqref{eq:moment_energy} over $i$ annihilates the right-hand side and one ends up with the energy conservation law for the mixture \eqref{eq:claw_energy}.
To recast the conservation law \eqref{eq:claw_energy-traditional}, we use appropriate macroscopic mixture variables \eqref{eq:Mixture-DensitiesFluxes}. Having in mind the relation 
 $ 3 p^{i} = 2 \rho^{i} \varepsilon^{i}$, 
the  energy density of the mixture may be written as:
\begin{equation}
  \alpha^3 \sum_{i=1}^{S} \rho^i \vert \vu^i\vert^2 + 3 \alpha \sum_{i=1}^{S} p^i 
    = \alpha^{3} \rho \vert \vu \vert^2 + 2 \alpha \rho \varepsilon.
\end{equation}
On the other hand, the energy flux of the mixture may be transformed to
\begin{multline*}
  \alpha^3 \sum_{i=1}^{S} \rho^i \vert \vu^i\vert^2  u^i_n 
    + \alpha \left( 2 \sum_{i=1}^{S} \sum_{k=1}^3 u^i_k p^i_{k n}  
    + 3 \sum_{i=1}^{S} u^i_n p^i \right) \\
 = \left[ \alpha^{3} \rho \vert \vu \vert^{2} + 2 \alpha \rho \varepsilon \right] u_{n} 
    + 2 \alpha \sum_{k=1}^{3} p_{kn} u_{k} + 2 \alpha q_{n}.
\end{multline*}
In such a way energy conservation law \eqref{eq:claw_energy} becomes~\eqref{eq:claw_energy-traditional}.
\end{proof}

\subsection{Asymptotic limit} 

We formally consider the asymptotic limit $\alpha \to 0$ of the scaled moment equations. In the case of local equilibrium approximation, one obtains the classical Maxwell--Stefan equations or their non-isothermal counterpart. Our aim is to formally analyze the asymptotic limit for the higher-order system of moment equations. First, an isothermal model will be analyzed to underline the difference between the standard Maxwell--Stefan model and the higher-order one. After that, an appropriate non-isothermal extension without heat conduction will be discussed. 

\begin{theorem} \label{th:Higher-order_M-S}
Formally, when the scaling parameter $\alpha$ tends to zero, the macroscopic quantities for each species $\rho^i$, $u^{i}_{k}$ and $p^i_{k\ell} = p^i \delta_{k\ell} + p^i_{\langle k\ell\rangle}$ defined by \eqref{eq:MomentsHigher-DLess0}-\eqref{eq:MomentsHigher-DLess2} satisfy the following system
\begin{equation}\label{eq:systeme_alpha_0}
 \begin{cases}
 \partial_t \rho^i + \sum_{k=1}^{3} \partial_{x_k} (\rho^i u^{i}_{k}) =0,\\
  \partial_{x_\ell} \left( p^i_{\ell\ell}\right) = \sum_{j=1}^S \frac{2\pi\|b^{ij}\|_{L^1}}{m_i+m_j} \rho^i\rho^j(u^j_\ell - u^i_\ell), ~1\leq\ell\leq3\\
  p^i_{\langle k\ell\rangle} =0, ~1\leq k\neq \ell\leq 3\\
  p^i_{\langle \ell\ell\rangle} = [M^{-1}\beta^{\ell\ell}]_i , ~1\leq  \ell\leq 3,
\end{cases}
\end{equation}
where $[M^{-1}\beta^{\ell\ell}]_i$ is the $i$-th component of the product of the $S\times S$ matrix $M$ defined by 
\[
M_{ij} =
\begin{cases}
 \dfrac{2\pi\|b^{ij}\|_{L^1}}{(m_i+m_j)^2} m_j \rho^i, & \text{if $j\neq i$},\\
 \dfrac{2\pi\|b^{ii}\|_{L^1}}{4m_i^2} m_i \rho^i -\displaystyle\sum_{j=1}^S \dfrac{2\pi\|b^{ij}\|_{L^1}}{(m_i+m_j)^2} (2m_i+m_j) \rho^j , & \text{if $j= i$},
\end{cases}
\]
and the vector $\beta^{\ell\ell}$ defined by
\begin{multline*}
  \beta^{\ell\ell}_i = \sum_{j=1}^S \frac{\pi }{(m_i+m_j)^2} \times\\
\begin{cases}
\Big[\|b^{ij}\|_{L^1}  \Big( (m_j-4m_i)\rho^j p^i  + 5m_j \rho^i p^j \Big)  -3 m_j B^{ij} \Big(\rho^j p^i +\rho^i p^j \Big) \Big]   ,  & \text{if $\ell=1,2$}\\
2\Big[  \|b^{ij}\|_{L^1}\Big( -(2m_i+m_j)\rho^j p^i  + m_j \rho^i p^j \Big) +3 m_j B^{ij} \Big(\rho^j p^i +\rho^i p^j \Big) \Big]  , & \text{if $\ell=3$}.
\end{cases}
\end{multline*}
In this system, the partial pressures $p^i$ for each species have to be given as functions of $\rho^i$ by an equation of state.
\end{theorem}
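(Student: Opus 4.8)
The plan is to pass to the formal limit $\alpha\to0$ in each balance law of Theorem~\ref{th:Moment-Equations}, tracking the powers of $\alpha$ so as to see which terms survive and which must cancel. The mass equations \eqref{eq:moment_mass} are immediate: after dividing by $\alpha$ they are independent of $\alpha$ and pass unchanged to the limit, giving the first line of \eqref{eq:systeme_alpha_0}. The substance is in the momentum flux balances \eqref{eq:moment_pressure}, whose right-hand side carries terms of order $1/\alpha$ (those proportional to $p^i_{k\ell}$, $p^j_{k\ell}$, $\rho^j p^i$ and $\rho^i p^j$) while the entire left-hand side is $O(\alpha)$. Multiplying \eqref{eq:moment_pressure} by $\alpha$ and letting $\alpha\to0$, the transport side drops out and the surviving $O(1)$ contribution is exactly the requirement that the coefficient of $1/\alpha$ vanish: for every species $i$ and every pair $(k,\ell)$,
\[
\sum_{j=1}^S \frac{2\pi\|b^{ij}\|_{L^1}}{(m_i+m_j)^2}\big(-(2m_i+m_j)\rho^j p^i_{k\ell}+m_j\rho^i p^j_{k\ell}\big) + \sum_{j=1}^S\frac{3 m_j A_{k\ell}}{(m_i+m_j)^2}\big(\rho^j p^i+\rho^i p^j\big)=0.
\]
This single algebraic identity generates the last three lines of \eqref{eq:systeme_alpha_0}.

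I would then split this identity according to whether $k\neq\ell$ or $k=\ell$. For $k\neq\ell$ one has $A_{k\ell}=0$ and $p^i_{k\ell}=p^i_{\langle k\ell\rangle}$, so the identity reduces to a homogeneous linear system $M\,\mathbf p_{\langle k\ell\rangle}=\mathbf 0$ for the vector $\mathbf p_{\langle k\ell\rangle}=(p^1_{\langle k\ell\rangle},\dots,p^S_{\langle k\ell\rangle})^T$; collecting the coefficient of each $p^j_{\langle k\ell\rangle}$ reproduces precisely the entries $M_{ij}$ and $M_{ii}$ of the statement. Once $M$ is invertible this forces $p^i_{\langle k\ell\rangle}=0$, the third line of \eqref{eq:systeme_alpha_0}. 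For $k=\ell$ I would insert $p^i_{\ell\ell}=p^i+p^i_{\langle\ell\ell\rangle}$ and separate the terms carrying the unknown deviatoric parts from those carrying only the scalar pressures $p^i$. The deviatoric terms reassemble into the same operator, $[M\,\mathbf p_{\langle\ell\ell\rangle}]_i$, while the pressure terms, after substituting $A_{\ell\ell}=\pi(\|b^{ij}\|_{L^1}-B^{ij})$ for $\ell=1,2$ and $A_{33}=2\pi B^{ij}$ and collecting the coefficients of $\rho^j p^i$ and $\rho^i p^j$, yield (up to the sign convention of the statement) the vector $\beta^{\ell\ell}$. Inverting $M$ then gives the fourth line $p^i_{\langle\ell\ell\rangle}=[M^{-1}\beta^{\ell\ell}]_i$.

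The second line of \eqref{eq:systeme_alpha_0} is obtained last. Letting $\alpha\to0$ in the momentum balances \eqref{eq:moment_momentum} annihilates the $O(\alpha^2)$ inertial and convective terms and leaves $\sum_k\partial_{x_k}p^i_{k\ell}$ balanced against the drag source. Using the already established vanishing of the off-diagonal deviatoric components, $\sum_k\partial_{x_k}p^i_{k\ell}=\partial_{x_\ell}p^i+\partial_{x_\ell}p^i_{\langle\ell\ell\rangle}=\partial_{x_\ell}p^i_{\ell\ell}$, which is the stated relation. The logical order matters here: the off-diagonal conclusion drawn from \eqref{eq:moment_pressure} must be in hand before the momentum balance can be collapsed in this way.

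The main obstacle is the invertibility of $M$, on which both the off-diagonal and the diagonal conclusions rest. I would establish it by showing that $-M$ is strictly column diagonally dominant with a positive diagonal: since $M_{jj}<0$ one has $(-M)_{jj}=|M_{jj}|>0$, and using the symmetry $\|b^{ij}\|_{L^1}=\|b^{ji}\|_{L^1}$ the off-diagonal column mass $\sum_{i\neq j}|M_{ij}|$ falls strictly below $|M_{jj}|$, the surplus being $\frac{\pi\|b^{jj}\|_{L^1}}{m_j}\rho^j+\sum_{i\neq j}\frac{2\pi\|b^{ij}\|_{L^1}}{m_i+m_j}\rho^i>0$; hence $M$ is nonsingular. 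A second, more conceptual point is consistency: the three diagonal relations must respect the tracelessness $\sum_\ell p^i_{\langle\ell\ell\rangle}=0$. Summing the diagonal identity over $\ell$ makes the deviatoric contribution vanish (it equals $[M\sum_\ell \mathbf p_{\langle\ell\ell\rangle}]_i=0$ by tracelessness) and leaves a constraint on the pressures alone, $\sum_j\frac{\pi\|b^{ij}\|_{L^1}}{(m_i+m_j)^2}\rho^i\rho^j\big(m_j p^j/\rho^j-m_i p^i/\rho^i\big)=0$, which is exactly the leading-order part of the energy balance \eqref{eq:moment_energy}. It holds precisely when $m_i p^i/\rho^i$ is the same for all species, i.e. under the equation-of-state restriction of Proposition~\ref{th:Ideal-gas}; this is why the $p^i$ cannot be prescribed freely but must be furnished by a compatible equation of state.
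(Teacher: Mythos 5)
Your proposal is correct and follows essentially the same route as the paper: the mass equation passes to the limit directly, the vanishing of the $O(1/\alpha)$ terms in the momentum flux balance yields the linear system governed by the matrix $M$, whose invertibility is established by the same column diagonal dominance computation, and the momentum balance at order $\alpha^{0}$ then gives the Maxwell--Stefan relations with the diagonal pressure tensor. Your closing consistency check via tracelessness, recovering the equation-of-state restriction, is material the paper defers to Proposition~\ref{th:Ideal-gas} rather than part of this proof, but it is a correct and welcome addition.
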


\begin{proof}
The first relation of  \eqref{eq:systeme_alpha_0} is obvious, and comes straightforwardly from \eqref{eq:moment_mass}.
Let us start with the proof of the last two relations of \eqref{eq:systeme_alpha_0}.
At order $\alpha^0$, the momentum flux balance law \eqref{eq:moment_pressure} gives
\begin{multline*}
 \sum_{j=1}^S\frac{2\pi \|b^{ij}\|_{L^1}}{(m_i+m_j)^2} \Big\{ -(2m_i+m_j)\rho^j p^i_{k\ell}
   + m_j \rho^i p^j_{k\ell} \Big\}\\
  + \frac{3 m_j A_{kk}\delta_{k\ell}}{(m_i+m_j)^2} \Big(\rho^j p^i +\rho^i p^j \Big) =0
\end{multline*}
Using the decomposition of the pressure into its diagonal and traceless part, we obtain
\begin{multline}\label{eq:pkl_specieswise}
 \sum_{j=1}^S\frac{2\pi \|b^{ij}\|_{L^1}}{(m_i+m_j)^2} \Big\{ -(2m_i+m_j)\rho^j p^i_{\langle k\ell\rangle}
   + m_j \rho^i p^j_{\langle k\ell\rangle} \Big\}\\
  + \delta_{k\ell}\Big\{ \frac{2\pi \|b^{ij}\|_{L^1}}{(m_i+m_j)^2} \Big( -(2m_i+m_j)\rho^j p^i  + m_j \rho^i p^j \Big)
  + \frac{3 m_j A_{kk}}{(m_i+m_j)^2} \Big(\rho^j p^i +\rho^i p^j \Big) \Big\}  =0
\end{multline}
For fixed $k,\ell$, observe that these relations, for any $1\leq i\leq S$, constitute a linear system for the unknowns $p^i_{\langle k\ell\rangle}$. 
Indeed, if we introduce the vector of pressures for all species $\mathbb P_{\langle k\ell\rangle} = (p^1_{\langle k\ell\rangle},p^2_{\langle k\ell\rangle},\cdots,p^S_{\langle k\ell\rangle})^T$, equations \eqref{eq:pkl_specieswise} can be rewritten as 
\[ M \mathbb P_{\langle k\ell\rangle} =\delta_{k\ell} \beta^{\ell\ell}.\]

Let us prove that the matrix $M$ is invertible, by proving that its transpose is diagonally dominant, \textit{i.e.} that for any $1\leq i \leq S$, $\vert M_{ii}\vert >\sum_{j\neq i} \vert M_{ji}\vert$.
To this end, let us split the sum over $j$ into the sum for $j\neq i$ and the term for $j=i$ in $M_{ii}$ to rewrite
\[ M_{ii} = -\Big( \frac{\pi\|b^{ii}\|_{L^1}}{m_i} \rho^i +\sum_{j\neq i} \frac{2\pi\|b^{ij}\|_{L^1}}{(m_i+m_j)^2} (2m_i+m_j) \rho^j \Big).\]
Since all coefficients in the matrix $M$ are non-negative, we can compute
\[
 \vert M_{ii}\vert -\sum_{j\neq i} \vert M_{ji}\vert = \frac{\pi\|b^{ii}\|_{L^1}}{m_i} \rho^i +\sum_{j\neq i} \frac{2\pi\|b^{ij}\|_{L^1}}{(m_i+m_j)^2} (m_i+m_j) \rho^j ,
\] 
which is positive as soon as one $\rho^i$ is positive. This proves the invertibility of the matrix $M$.
We thus obtain that for any $1\leq i\leq S$ and $1\leq k\neq \ell\leq 3$, $p^i_{\langle k\ell\rangle} = 0$.
Further, the values $p^i_{\langle 11\rangle}=p^i_{\langle 22\rangle} $ and $p^i_{\langle 33\rangle}$ are non-identically zero for all species $1\leq i \leq S$, and can be obtained through the inverse of $M$ and $\beta^{\ell\ell}$ as
\begin{equation}\label{eq:Pkk}
\mathbb P_{\langle \ell\ell\rangle} = M^{-1}\beta^{\ell\ell}, \qquad 1\leq \ell \leq 3.
\end{equation}
Thus, the pressure tensor is diagonal, and 
\begin{equation}\label{eq:pikl}
p^i_{k\ell} = (p^i + p^i_{\langle kk\rangle}) \delta_{k\ell}, \qquad 1\leq i \leq S, ~1\leq k,\ell \leq 3.
\end{equation}

We can now obtain the second relation in \eqref{eq:systeme_alpha_0}. The formal limit of the momentum balance law \eqref{eq:moment_momentum} at order $\alpha^0$ gives
\begin{equation*}
\sum_{k=1}^3 \partial_{x_k} p^i_{k\ell} = \sum_{j=1}^S \frac{2\pi\|b^{ij}\|_{L^1}}{m_i+m_j} \rho^i\rho^j(u^j_\ell - u^i_\ell) 
\end{equation*}
Using \eqref{eq:Pkk} and \eqref{eq:pikl} in the previous equation, we obtain
\begin{equation}\label{eq:limitMS}
 \partial_{x_\ell} \left( p^i+p^i_{\langle\ell\ell\rangle}\right) = \sum_{j=1}^S \frac{2\pi\|b^{ij}\|_{L^1}}{m_i+m_j} \rho^i\rho^j(u^j_\ell - u^i_\ell), \qquad 1\leq \ell \leq 3.
\end{equation}
Moreover, observe that if we sum over $i$, the right-hand side cancels, and it leads to
\[ \partial_{x_\ell} \left( p^i+\sum_{i=1}^S p^i_{\langle\ell\ell\rangle}\right) =0, \qquad 1\leq \ell \leq 3. \]
\end{proof}

Theorem \ref{th:Higher-order_M-S} inherits a compatibility condition which restricts the structure of partial pressures $p^{i}$, which is summarized in the following Proposition.

\begin{proposition} \label{th:Ideal-gas}
In order for the system \eqref{eq:systeme_alpha_0} to have a solution, the equation of state giving the pressures $p^i$ has to satisfy the following compatibility condition
\begin{equation}\label{eq:compat}
m_i \frac{p^i}{\rho^i} = \theta(t,x), 
\qquad 1\leq i \leq S.
\end{equation}
This condition is in particular satisfied by the scaled ideal gas law: 
\begin{equation}\label{eq:EOSideal}
  p^i(t,\vx) = \frac53 \rho^i(t,\vx) \frac{T(t,\vx)}{m_i},
\end{equation}
where $T$ is the common temperature of the species. 
\end{proposition}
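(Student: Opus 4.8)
The plan is to read the compatibility condition directly off the requirement that the diagonal deviatoric pressure produced by the dynamics be genuinely \emph{traceless}, as it must be by its very definition $p^i_{k\ell}=p^i\delta_{k\ell}+p^i_{\langle k\ell\rangle}$ with $\sum_k p^i_{\langle kk\rangle}=0$. Concretely, the last relation of \eqref{eq:systeme_alpha_0} fixes $p^i_{\langle\ell\ell\rangle}=[M^{-1}\beta^{\ell\ell}]_i$, while the structure of $\beta^{\ell\ell}$ in Theorem~\ref{th:Higher-order_M-S} shows that the cases $\ell=1$ and $\ell=2$ share the same formula, so $p^i_{\langle 11\rangle}=p^i_{\langle 22\rangle}$. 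The trace-free constraint then amounts to $2p^i_{\langle 11\rangle}+p^i_{\langle 33\rangle}=0$ for every species $i$. Since $M$ is invertible (as established in the proof of Theorem~\ref{th:Higher-order_M-S}) and $\mathbb P_{\langle\ell\ell\rangle}=M^{-1}\beta^{\ell\ell}$, this is equivalent to the componentwise identity $2\beta^{11}+\beta^{33}=\mathbf 0$.

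The second step is to compute $2\beta^{11}_i+\beta^{33}_i$ explicitly. I expect the $B^{ij}$-contributions to cancel (the factor $-3m_jB^{ij}$ in $\beta^{11}$ is doubled, and exactly offset by the $+3m_jB^{ij}$ appearing with the factor $2$ in $\beta^{33}$), leaving only the $\|b^{ij}\|_{L^1}$-terms. Collecting these, the coefficient of $\rho^j p^i$ becomes $(m_j-4m_i)-(2m_i+m_j)=-6m_i$ and the coefficient of $\rho^i p^j$ becomes $5m_j+m_j=6m_j$, so that
\begin{equation*}
  2\beta^{11}_i+\beta^{33}_i
    = \sum_{j=1}^S \frac{12\pi\|b^{ij}\|_{L^1}}{(m_i+m_j)^2}\,\rho^i\rho^j
      \left( m_j\frac{p^j}{\rho^j} - m_i\frac{p^i}{\rho^i} \right).
\end{equation*}
Introducing $\theta_i:=m_i p^i/\rho^i$ and the nonnegative symmetric weights $a_{ij}:=\tfrac{12\pi\|b^{ij}\|_{L^1}}{(m_i+m_j)^2}\rho^i\rho^j=a_{ji}$, the compatibility requirement is precisely the homogeneous graph-Laplacian system $\sum_{j=1}^S a_{ij}(\theta_j-\theta_i)=0$ for all $i$.

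The final step is to solve this system. Multiplying the $i$-th equation by $\theta_i$, summing over $i$, and using $a_{ij}=a_{ji}$, the left-hand side reorganizes into $-\tfrac12\sum_{i,j}a_{ij}(\theta_i-\theta_j)^2$, which must vanish. As all $a_{ij}\ge 0$, strictly positive whenever the densities are positive, every squared difference vanishes, forcing $\theta_i=\theta_j$ for all $i,j$; that is, $m_i p^i/\rho^i=\theta(t,x)$, which is \eqref{eq:compat}. Substituting the scaled ideal gas law \eqref{eq:EOSideal} then gives $m_i p^i/\rho^i=\tfrac53 T$, independent of $i$ since $T$ is the common temperature, so \eqref{eq:EOSideal} indeed satisfies \eqref{eq:compat}. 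The main obstacle is not any single computation but recognizing that ``having a solution'' means enforcing the \emph{a priori} trace-free character of $p^i_{\langle k\ell\rangle}$ against the value dictated by $M^{-1}\beta^{\ell\ell}$; once this is identified, the connectivity/positivity argument that promotes ``$\theta_i=\theta_j$ whenever coupled'' to ``$\theta_i$ globally constant'' is the only genuinely nontrivial point, and it rests on the positivity of the partial densities and cross sections.
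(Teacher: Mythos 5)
Your proof is correct. The first half is the paper's own argument in different clothing: the paper takes the trace of \eqref{eq:pkl_specieswise} (set $k=\ell$, sum over $k$) so that $\sum_k p^i_{\langle kk\rangle}=0$ kills the deviatoric unknowns and leaves $\sum_{j}\frac{\|b^{ij}\|_{L^1}}{(m_i+m_j)^2}\bigl(m_j\rho^i p^j-m_i\rho^j p^i\bigr)=0$; you reach the identical relation by demanding $\sum_\ell[M^{-1}\beta^{\ell\ell}]_i=0$, which by invertibility of $M$ is $2\beta^{11}+\beta^{33}=\mathbf 0$, and your coefficient bookkeeping ($-6m_i$, $+6m_j$, cancellation of the $B^{ij}$ terms) checks out against the paper's \eqref{eq:relationspi}. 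Where you genuinely diverge is in solving the resulting homogeneous system. The paper eliminates via $\zeta^i:=m_ip^i/\rho^i-m_Sp^S/\rho^S$ and a substitution cascade, a step that is written quite tersely (the inference from $\sum_j\frac{\|b^{Sj}\|_{L^1}}{(m_S+m_j)^2}\rho^j\zeta^j=0$ to the vanishing of the analogous $i$-indexed sums is not spelled out). You instead recognize the system as a weighted graph Laplacian $\sum_j a_{ij}(\theta_j-\theta_i)=0$ with symmetric nonnegative weights, pair it with $\theta$, and conclude from $-\tfrac12\sum_{i,j}a_{ij}(\theta_i-\theta_j)^2=0$ that all $\theta_i$ coincide. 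This buys a cleaner and more robust argument: it makes explicit that the conclusion rests only on the symmetry and strict positivity of the couplings $a_{ij}=\frac{12\pi\|b^{ij}\|_{L^1}}{(m_i+m_j)^2}\rho^i\rho^j$ (i.e.\ positive densities and cross sections), and it would survive verbatim under weaker connectivity assumptions. The verification that the ideal gas law \eqref{eq:EOSideal} satisfies \eqref{eq:compat} is immediate in both treatments.
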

\begin{proof}
 In equations \eqref{eq:pkl_specieswise}, choosing $k=\ell$ and summing over $k$ cancels the terms involving $p^i_{\langle kk\rangle}$, and for any $1\leq i \leq S$, it leads to
\begin{multline*}
  \sum_{j=1}^S \frac{6\pi \|b^{ij}\|_{L^1}}{(m_i+m_j)^2} \Big\{ -(2m_i+m_j)\rho^j p^i  + m_j \rho^i p^j 
  +  m_j (\rho^j p^i +\rho^i p^j ) \Big\}  \\
  =  \sum_{j=1}^S \frac{12\pi \|b^{ij}\|_{L^1}}{(m_i+m_j)^2} \Big(-m_i \rho^j p^i  + m_j \rho^i p^j  \Big) =0,
\end{multline*}
since $\sum_{k=1}^3 A_{kk} = 2\pi\|b^{ij}\|_{L^1}$.
This means that 
\begin{equation}\label{eq:relationspi}
  \sum_{j=1}^S \frac{\|b^{ij}\|_{L^1}}{(m_i+m_j)^2} \Big(m_j \rho^i p^j  - m_i \rho^j p^i   \Big) =0, 
  \quad i = 1,\ldots,S. 
\end{equation}
These $S$ equations are linearly dependent, since the sum of all these relations is zero. 
Moreover, we can prove that they imply the compatibility condition \eqref{eq:compat}.
Indeed, let $\zeta^i:=m_i \frac{p^i}{\rho^i} - m_S \frac{p^S}{\rho^S}$. Then, we can replace $p^i$ in \eqref{eq:relationspi} for $1\leq i \leq S-1$, which gives
\[ \rho^i \sum_{j=1}^S  \frac{\|b^{ij}\|_{L^1}}{(m_i+m_j)^2} \rho^j \left(\zeta^j+m_S \frac{p^S}{\rho^S}\right) - \rho^i \left(\zeta^i + m_S \frac{p^S}{\rho^S}\right)\sum_{j=1}^S  \frac{\|b^{ij}\|_{L^1}}{(m_i+m_j)^2}\rho^j =0,\]
and for $i=S$
\[ \rho^S \sum_{j=1}^S  \frac{\|b^{ij}\|_{L^1}}{(m_i+m_j)^2} \rho^j \left(\zeta^j+m_S \frac{p^S}{\rho^S}\right) - \rho^S m_S \frac{p^S}{\rho^S}\sum_{j=1}^S  \frac{\|b^{ij}\|_{L^1}}{(m_i+m_j)^2}\rho^j =0.\]
This last equation implies that 
\[  \frac{\|b^{ij}\|_{L^1}}{(m_i+m_j)^2} \rho^j \zeta^j =0,\]
which in turns implies in the previous relations, for $1\leq i \leq S-1$
\[ \rho^i \sum_{j=1}^S  \frac{\|b^{ij}\|_{L^1}}{(m_i+m_j)^2} \rho^j m_S \frac{p^S}{\rho^S} - \rho^i \zeta^i \sum_{j=1}^S  \frac{\|b^{ij}\|_{L^1}}{(m_i+m_j)^2}\rho^j  + \rho^i m_S \frac{p^S}{\rho^S}\sum_{j=1}^S  \frac{\|b^{ij}\|_{L^1}}{(m_i+m_j)^2}\rho^j=0,\]
which means that for any $1\leq i \leq S-1$
\[\zeta^i \sum_{j=1}^S  \frac{\|b^{ij}\|_{L^1}}{(m_i+m_j)^2}\rho^j  =0.\]
This implies that $\zeta^i = 0$ for any $1\leq i \leq S-1$, which completes the proof.
\end{proof} 

The main outcome of Theorem \ref{th:Higher-order_M-S} (and Proposition \ref{th:Ideal-gas}), describing the higher-order Maxwell--Stefan model in the asymptotic limit, is that momentum  exchange between the species is not balanced solely by the gradients of partial pressures. One must take into account the diagonal terms of the traceless part of the pressure tensor, determined by the set of algebraic relations. 

In Theorem \ref{th:Higher-order_M-S}, the system \eqref{eq:systeme_alpha_0} is a closed one when the partial pressures $p^i$ are solely determined by the densities, meaning that the quantity $\theta$ in \eqref{eq:compat} is a known constant. This corresponds to the isothermal case. In the non-isothermal case, the temperature varies and is an additional unknown of the system. In order to close the system, we need an additional equation, which is given by the asymptotic limit of the energy conservation for the mixture \eqref{eq:claw_energy}
\[ \partial_t \left( \frac{3}{2} \sum_{i=1}^{S} p^i \right)   +  \sum_{n=1}^3 \partial_{x_n} \left( 	\sum_{i=1}^{S} \sum_{k=1}^3 u^i_k p^i_{k n}  + \frac{3}{2} \sum_{i=1}^{S} u^i_n p^i \right) 	= 0.\]

\subsection{Higher-order diffusion model} 

In the final step of higher-order description of diffusion, a more general model than the one obtained in the asymptotic limit is proposed. To motivate the forthcoming analysis, let us mention that in the mixture models relevant for diffusion, governing equations consist of the balance laws of masses and the balance laws of momenta for the species. They can be derived either in a systematic way \cite{muller1998rational,bothe2015continuum,ruggeri2021classical}, or in an \emph{ad hoc} manner \cite{kerkhof2005analysis}. In the latter case, even viscous dissipation is introduced by assumption. 

Our analysis will be based upon scaled moment equations given in Theorem \ref{th:Moment-Equations} and Proposition \ref{th:Energy-conservation}, i.e. mass balances \eqref{eq:moment_mass}, momentum balances \eqref{eq:moment_momentum}, momentum flux balances \eqref{eq:moment_pressure} and energy conservation \eqref{eq:claw_energy}. They provide a clear insight into the order of magnitude of all terms needed for the construction of generalized diffusion models. In particular, to build up the model which inherits the inertia terms in the momentum balance equations \eqref{eq:moment_momentum} one has to keep all the $O(\alpha^{2})$ terms. This corresponds to the models of diffusion proposed in \cite{bothe2015continuum,kerkhof2005analysis}. If we want to expand this to higher order models, we have to determine the order of magnitude of terms which will be kept in the governing equations. The model we propose here will be limited to the $O(\alpha^{2})$ terms in all equations. 

\begin{proposition} \label{th:Higher-order_Diffusion}
The higher-order diffusion model which inherits up to $O(\alpha^{2})$ terms consists of the following set of equations: 
\begin{equation}\label{eq:Mass-proper-Final} 
 \alpha \partial_t\rho^i + \alpha \sum_{k=1}^3 \partial_{x_k} ( \rho^i u^i_k ) = 0,
\end{equation}
\begin{equation}\label{eq:Momentum-proper-Final}
  \alpha^2\left[ \partial_t (\rho^i u^i_\ell) +\sum_{k=1}^3 \partial_{x_k} ( \rho^i u^i_k u^i_\ell)  \right] +\sum_{k=1}^3 \partial_{x_k} p^i_{k\ell} =\sum_{j=1}^S   \frac{2\pi \|b^{ij}\|_{L^1}}{m_i+m_j} \rho^i \rho^j (u^j_\ell - u^i_\ell), 
\end{equation}
\begin{multline}\label{eq:MomentumFlux-proper-Final}
  \alpha \, \partial_t p^i_{k\ell} 
+ \alpha \sum_{n=1}^3 \partial_{x_n} (u^i_k p^i_{\ell n} + u^i_\ell p^i_{nk} + u^i_n p^i_{k\ell}) \\
 = \alpha \sum_{j=1}^S\frac{2\pi \|b^{ij}\|_{L^1}}{(m_i+m_j)^2} \rho^{i} \rho^{j} \Big\{ -(2m_i+m_j) u^i_k u^i_\ell
 + m_i ( u^i_k u^j_\ell + u^i_\ell u^j_k ) + m_j u^j_k u^j_\ell \Big\} \\
 + \alpha \sum_{j=1}^S\frac{m_j A_{k\ell}}{(m_i+m_j)^2} \rho^i \rho^j \vert\vu^i - \vu^j \vert^2\\
 + \frac{1}{\alpha} \sum_{j=1}^S\frac{2\pi \|b^{ij}\|_{L^1}}{(m_i+m_j)^2} 
\Big\{ -(2m_i+m_j)\rho^j p^i_{k\ell} + m_j \rho^i p^j_{k\ell}  \Big\} \\
 + \frac{1}{\alpha} \sum_{j=1}^S\frac{m_j A_{k\ell}}{(m_i+m_j)^2} \Big( 3 \rho^j p^i + 3 \rho^i p^j \Big),
\end{multline}
\begin{equation}\label{eq:Energy-proper-Final}
 \alpha \partial_t \left( \frac{3}{2} \sum_{i=1}^{S} p^i \right) + \alpha \sum_{n=1}^3 \partial_{x_n} \left( 
\sum_{i=1}^{S} \sum_{k=1}^3 u^i_k p^i_{k n}  + \frac{3}{2} \sum_{i=1}^{S} u^i_n p^i \right) = 0.
\end{equation}
\end{proposition}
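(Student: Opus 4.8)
The plan is to obtain all four equations by a term-by-term order-in-$\alpha$ truncation of the exact scaled balance laws already established: the mass, momentum and momentum-flux balances \eqref{eq:moment_mass}, \eqref{eq:moment_momentum} and \eqref{eq:moment_pressure} from Theorem \ref{th:Moment-Equations}, together with the mixture energy conservation law \eqref{eq:claw_energy} from Proposition \ref{th:Energy-conservation}. Since in the scaled variables the quantities $\rho^i$, $u^i_k$, $p^i_{k\ell}$ and $p^i$ are all $O(1)$, the order in $\alpha$ of each monomial can be read off directly from its explicit power of $\alpha$. The model retaining terms ``up to $O(\alpha^2)$'' is then defined by keeping every term of order at most $\alpha^2$ and discarding those of order $\alpha^3$ and higher; in particular this keeps the singular $O(1/\alpha)$ source terms untouched.

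First I would dispatch the two balance laws that require no truncation at all. The mass equation \eqref{eq:moment_mass} contains only $O(\alpha)$ terms and is therefore already in the form \eqref{eq:Mass-proper-Final}. In the momentum balance \eqref{eq:moment_momentum} the pressure-gradient term $\sum_k \partial_{x_k} p^i_{k\ell}$ and the collision source are $O(1)$, while the inertia terms carry a prefactor $\alpha^2$; every term is thus of order at most $\alpha^2$, so \eqref{eq:moment_momentum} is kept verbatim and coincides with \eqref{eq:Momentum-proper-Final}.

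The core of the argument is the momentum-flux balance. Here I would expand each compound term of \eqref{eq:moment_pressure} into its constituent powers of $\alpha$: the time-derivative and the flux split into an $O(\alpha^3)$ part carrying the products $\rho^i u^i_k u^i_\ell$ and $\rho^i u^i_k u^i_\ell u^i_n$, and an $O(\alpha)$ part carrying $p^i_{k\ell}$ and the fluxes $u^i_k p^i_{\ell n}$; on the right-hand side the velocity-product contributions are $O(\alpha)$ while the terms in $p^i_{k\ell}$, $p^j_{k\ell}$, $p^i$ and $p^j$ are $O(1/\alpha)$. Crucially, the balance \eqref{eq:moment_pressure} contains no terms of order $\alpha^0$ or $\alpha^2$, so the only constituents exceeding $O(\alpha^2)$ are precisely the two $O(\alpha^3)$ inertia pieces. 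Discarding them and regrouping leaves exactly \eqref{eq:MomentumFlux-proper-Final}. The same mechanism applies to the mixture energy law \eqref{eq:claw_energy}: its $O(\alpha^3)$ kinetic-energy density and flux terms are dropped, and the surviving $O(\alpha)$ terms in $\sum_i p^i$, $\sum_{i,k} u^i_k p^i_{kn}$ and $\sum_i u^i_n p^i$ are precisely \eqref{eq:Energy-proper-Final}.

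The only genuine obstacle is bookkeeping rather than analysis: one must check that, after expanding every compound flux and source, the balance \eqref{eq:moment_pressure} (and likewise \eqref{eq:claw_energy}) contains no monomial of order $\alpha^0$ or $\alpha^2$, so that truncation at $O(\alpha^2)$ removes exactly the two $O(\alpha^3)$ inertia terms and introduces no error at the retained order. This gap in the spectrum of orders is inherited from the source term $\mathcal{Q}^{ij}_{k\ell}$ computed in the proof of Theorem \ref{th:Moment-Equations}, which carries only $\alpha^0$ and $\alpha^2$ contributions -- the odd power $\alpha^1$ being absent because the peculiar-velocity constraints \eqref{eq:MomentsPeculiar-DLess} force the first moments $\intR m_i c^i_k f^i \dd\vc$ to vanish, killing every term linear in the macroscopic velocities. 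Once this gap is confirmed, matching each surviving term against its counterpart in \eqref{eq:Mass-proper-Final}--\eqref{eq:Energy-proper-Final} is immediate.
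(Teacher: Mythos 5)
Your proposal is correct and follows essentially the same route as the paper: the mass and momentum balances \eqref{eq:moment_mass} and \eqref{eq:moment_momentum} are kept verbatim, while \eqref{eq:MomentumFlux-proper-Final} and \eqref{eq:Energy-proper-Final} are obtained from \eqref{eq:moment_pressure} and \eqref{eq:claw_energy} by discarding the $O(\alpha^{3})$ inertia terms. Your extra bookkeeping observation --- that the momentum-flux balance contains no $\alpha^{0}$ or $\alpha^{2}$ monomials, so the truncation removes exactly the two $O(\alpha^{3})$ pieces --- is a correct and slightly more explicit justification than the paper's one-line proof, but it is not a different argument.
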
 

\begin{proof}
The mass balances \eqref{eq:Mass-proper-Final} and the momentum balances \eqref{eq:Momentum-proper-Final} are the exact moment equations \eqref{eq:moment_mass} and \eqref{eq:moment_momentum}, respectively. The momentum flux balances \eqref{eq:MomentumFlux-proper-Final} and the energy conservation \eqref{eq:Energy-proper-Final} are obtained from the moment equations \eqref{eq:moment_pressure} and \eqref{eq:claw_energy}, respectively, by neglecting the terms $O(\alpha^{3})$. 
\end{proof}

It must be emphasized that the model \eqref{eq:Mass-proper-Final}-\eqref{eq:Energy-proper-Final} is independent of the higher-order Maxwell--Stefan model obtained in the asymptotic limit. In particular, $O(\alpha^{-1})$ terms in equation \eqref{eq:MomentumFlux-proper-Final} (or in equation \eqref{eq:moment_pressure}) vanish in the asymptotic limit, and determine the higher-order corrections to the Maxwell--Stefan model as a solution of the system of algebraic equations (see Theorem \ref{th:Higher-order_M-S}). However, one cannot discard the same terms in equation \eqref{eq:MomentumFlux-proper-Final}, since these are now the genuine balance laws, i.e. the rate type equations, which determine the behavior of pressure tensors $p^{i}_{k\ell}$. 

\begin{remark}
The generalized Maxwell--Stefan model \eqref{eq:Mass-proper-Final}-\eqref{eq:Energy-proper-Final} is non-isothermal by assumption. In the non-isothermal models of previous works \cite{hutridurga2017maxwell,anwasia2022maximum}, all the species bear the same temperature $T$.  Nevertheless, it is a natural question whether the multi-temperature assumption is more appropriate in a generalized setting. 
Because of the terms of order $O(\alpha)$ in \eqref{eq:MomentumFlux-proper-Final}, the same reasoning as in Proposition \ref{th:Ideal-gas} would lead to \eqref{eq:compat} with a correction of order $O(\alpha^{2})$. However, since $p^{i} \sim T^{i}$, it turns out that $O(\alpha^{2})$ terms in $T^{i}$ will eventually bring $O(\alpha^{3})$ contributions to the energy conservation \eqref{eq:Energy-proper-Final}, and  can thus be neglected in this approximation. As a conclusion, single-temperature assumption is still appropriate in the generalized Maxwell--Stefan model when limited to $O(\alpha^{2})$ terms. 
\end{remark}

\section{Conclusion} 
\label{sec:Conclusion}

In this study, the higher-order Maxwell--Stefan diffusion model is derived starting from the kinetic theory of mixtures. It was based upon moment equations for mass, momentum and momentum flux of the species in isothermal case, adjoined with energy conservation law for the mixture in non-isothermal case. All the equations were analyzed in the diffusive scaling. They were closed by the use of an approximate form of the velocity distribution function, obtained by means of maximum entropy principle. The aim was to incorporate the viscous dissipation in the model, in a consistent way, through a proper asymptotic analysis. 

The higher-order model was derived in two different forms. The first form was obtained in the asymptotic limit. It generalizes the classical Maxwell--Stefan model by including the diagonal terms of deviatoric part of the partial stress tensors, which turn out to be linear functions of partial pressures (Theorem \ref{th:Higher-order_M-S}). In such a way, the coupling in Maxwell--Stefan relations does not occur only through diffusion fluxes, but also through the gradients of partial pressures. 

The second form was obtained when terms of order $O(\alpha^{2})$ were retained in moment equations, and higher-order terms were neglected (Theorem \ref{th:Higher-order_Diffusion}). This results in a set of equations containing complete balance laws of mass and momentum of species, and truncated balance laws of momentum fluxes. On one hand, such a model includes complete partial pressure tensors and determines their behavior through the rate-type equations. On the other hand, it presents a consistent generalization of the diffusion model with respect to the order of magnitude of terms which appear in governing equations. 

The higher-order model presented in this study opens several possible paths for further analysis. First, it relies on the moment method applied to the set of Boltzmann equations, and the viscous dissipation is included through the balance laws. It would be of interest to recover the approximation of partial stress tensors for Newtonian fluids. Second, the generalization obtained in the asymptotic limit leads to stronger coupling of equations. It is interesting to see the implications of this coupling through some numerical simulations and to compare the obtained results with a classical Maxwell--Stefan model.

\bibliography{M-S_bibliography}

\end{document}